 \newcommand{\Nr}{N\makebox[0pt][l]{\raisebox{5pt}{$\scriptscriptstyle{{\rm o}}$}}-}
 \newcommand{\f}{\varphi}
 \newcommand\diag{\mathop{diag}}
 \def\dC{{\mathbb C}}
 \def\dZ{{\mathbb Z}}
 \newcommand{\lam}{\lambda}
 \newcommand{\vek}{\overrightarrow}
 \newtheorem{theorem}{Theorem}
 \newtheorem{lemma}[theorem]{Lemma}
 \newtheorem{definition}[theorem]{Definition}
 \newtheorem{corollary}[theorem]{Corollary}
 \newtheorem{proposition}[theorem]{Proposition}
 \date{}
\begin{document}
 \maketitle


 \begin{abstract}
 A completeness theorem is proved involving a system of
 integro-differential equations with some $\lambda$-depending boundary
 conditions. Also some sufficient conditions for the root functions
 to form a Riesz basis are established.
 \end{abstract}

 1. It is well known \cite{Mar} that the system of eigenfunctions and
 associate functions (SEAF) of the Sturm -- Liouville problem
 \begin{equation} \label {1}
 -y''+q(x)y=\lam^2 y,
 \end{equation}
 \begin{equation} \label{2}
 y'(0)-h_0y(0)=y'(1)-h_1y(1)=0,
 \end{equation}
 is complete in $L_2[0,1]$ for arbitrary complex valued potential
 $q\in L_1[0,1]$ and $h_0,h_1\in \dC$. A similar result is also known
 for arbitrary nondegenerate boundary conditions (see \cite{Mar}).

 A completeness result for a boundary value problem
 of arbitrary order differential equations of the form
 \begin{equation}
 y^{(n)}+\sum\limits_{j=0}^{n-2}q_j(x)y=\lam^n y,
 \end{equation}
 with separated boundary conditions, has been announced by
 M.V. Keldysh \cite{Kel} and was first proved by A.A. Shkalikov
 \cite{Shkal}.

 In \cite{Mal+Or} M.M. Malamud and one of the authors have
 generalized the above mentioned results from \cite{Mar} to the
 case of first order systems with arbitrary boundary conditions
 (not depending on a spectral parameter).

 In \cite{Tar1} and \cite{Tar2} the completeness results for
 the problem \eqref{1}, \eqref{2} have been generalized to
 the case of nonlinear $\lambda$-depending boundary conditions
 of the form
 \begin{equation}  \label {a1}
 \begin{cases}
 P_{11}(\lam)y(0)+P_{12}(\lam)y'(0)=0 \\
 P_{21}(\lam)y^2(\frac 12)+P_{22}(\lam)y(\tfrac  12)y'(\tfrac  12)+
 P_{23}(\lam){y'}^2(\frac 12)=0
 \end{cases}
 \end{equation}
 and of the form
 \begin{equation}\label {a2}
 \begin{cases}
 \begin{split}
 & P_{10}(\lam)y^2(0)+P_{11}(\lam){y'}^2(0)+
      P_{12}(\lam)y^2(\tfrac 13)+P_{13}(\lam){y'}^2(\tfrac 13) \\
 &+P_{14}(\lam)y(0)y'(0)+P_{15}(\lam)y(0)y(\tfrac 13)
  +P_{16}(\lam)y(0)y'(\tfrac 13) \\
 &+P_{17}(\lam)y'(0)y(\tfrac 13)+P_{18}(\lam)y'(0)y'(\tfrac 13)
  +P_{19}(\lam)y(\tfrac 13)y'(\tfrac 13)=0 \\
 \end{split} \\ \\
 \begin{split}
 & P_{20}(\lam)y^2(0)+P_{21}(\lam){y'}^2(0)+
      P_{22}(\lam)y^2(\tfrac 13)+P_{23}(\lam){y'}^2(\tfrac 13) \\
 & +P_{24}(\lam)y(0)y'(0)+P_{25}(\lam)y(0)y(\tfrac 13)+
      P_{26}(\lam)y(0)y'(\tfrac 13) \\
 & +P_{27}(\lam)y'(0)y(\tfrac 13)+P_{28}(\lam)y'(0)y'(\tfrac 13)+
      P_{29}(\lam)y(\tfrac 13)y'(\tfrac 13)=0,
 \end{split}
 \end{cases}
 \end{equation}
 where $P_{ij}(\lam)$ are polynomials.

 Moreover, in \cite{OrMFAT} analogous results were
 obtained for a system with a pair of separated $\lambda$-depending
 boundary conditions similar to the conditions \eqref{a1} and
 \eqref{a2}.

 In the recent papers \cite{TroYa}, \cite{TroYa2}
 a related problem concerning the Riesz basis property of the SEAF for a first-order system
 of the form \eqref{3} given below with separated
 boundary conditions, not depending on a spectral parameter,
 has been established. In the present paper completeness and Riesz basis property
 of the SEAF are considered for Dirac-type systems with certain
 $\lambda$-depending boundary conditions.
 Naturally the result cover the case of Dirac operators, which have been more extensively studied
 in the literature. In particular, we wish to mention the recent studies on spectral decompositions
 of 1D periodic Dirac operators and related convergence results by B. Mityagin and P.
 Djakov; see \cite{Mit} and \cite{DMit1}, \cite[Section~4]{DMit2}.

 The paper is organized as follows.
 In Section 1 we prove a completeness result for
 the  first order systems of certain integro-differential equations
 involving general linear or quadratic
 $\lambda$-depending boundary conditions.
 More precisely, let $B=\diag(a^{-1},b^{-1})$ be a $2\times 2$
 diagonal matrix with $a<0<b$.
 Consider in $L_2[0,1]\oplus L_2[0,1]$ a boundary value problem for
 the first order system of ordinary integro-differential equations of the form
 \begin{equation} \label {3}
 \frac 1iBy'+Q(x)y+\int_0^xM(x,t)y(t)\,dt=\lam y.
 \end{equation}
 Here
 \begin{equation*}
 Q(x,t)=\begin{pmatrix} 0&q_1(x,t)\\q_2(x,t)&0\end{pmatrix},\quad
 M(x,t)=\begin{pmatrix} M_{11}(x,t)&M_{12}(x,t)\\M_{21}(x,t)&M_{22}(x,t)\end{pmatrix},\quad
 y(x)=\binom{y_1(x)}{y_2(x)},
 \end{equation*}
 where it is assumed that $q_j\in L_1[0,1]$ and
 $M_{ij}\in L_\infty(\Omega)$,
 $\Omega=\{0\le t\le x\le 1\}$, $i,j=1,2$.
 Two types of $\lambda$-depending boundary conditions will be treated.
 Namely:

 (i) arbitrary linear conditions of the form
 \begin{equation} \label{5}
 \begin{cases}
 P_{11}(\lam)y_1(0)+P_{12}(\lam)y_2(0)+P_{13}(\lam)y_1(1)+P_{14}(\lam)y_2(1)=0 \\
 P_{21}(\lam)y_1(0)+P_{22}(\lam)y_2(0)+P_{23}(\lam)y_1(1)+P_{24}(\lam)y_2(1)=0
 \end{cases}
 \end{equation}

 and:

 (ii) arbitrary quadratic conditions of the form
 \begin{equation} \label{6}
 \begin{cases}
 \begin{split}
 & P_{10}(\lam)y_1^2(0)+P_{11}(\lam)y_2^2(0)+
      P_{12}(\lam)y_1^2(\tfrac  12)+P_{13}(\lam)y_2^2(\tfrac  12) \\
 &+P_{14}(\lam)y_1(0)y_2(0)+P_{15}(\lam)y_1(0)y_1(\tfrac  12)+
      P_{16}(\lam)y_1(0)y_2(\tfrac  12) \\
 &+P_{17}(\lam)y_2(0)y_1(\tfrac  12)+P_{18}(\lam)y_2(0)y_2(\tfrac  12)+
      P_{19}(\lam)y_1(\tfrac  12)y_2(\tfrac  12)=0 \\
 \end{split} \\ \\
 \begin{split}
 &P_{20}(\lam)y_1^2(0)+P_{21}(\lam)y_2^2(0)+
      P_{22}(\lam)y_1^2(\tfrac  12)+P_{23}(\lam)y_2^2(\tfrac  12) \\
 &+P_{24}(\lam)y_1(0)y_2(0)+P_{25}(\lam)y_1(0)y_1(\tfrac  12)+
      P_{26}(\lam)y_1(0)y_2(\tfrac  12) \\
 &+P_{27}(\lam)y_2(0)y_1(\tfrac  12)+P_{28}(\lam)y_2(0)y_2(\tfrac  12)+
      P_{29}(\lam)y_1(\tfrac  12)y_2(\tfrac  12)=0,
 \end{split}
 \end{cases}
 \end{equation}
 where $P_{ij}(\lam)$ are polynomials.

 In Section 2 some general sufficient conditions for polynomials $P_{ij}$ are established in order that
 the SEAF of the problem \eqref{3} with separated $\lambda$-depending
 boundary conditions forms a Riesz basis.

 Some of the main results of this paper has been announced without proofs in \cite{HaOr}, \cite{HaOr2}.

 \section{Theorems on completeness of SEAF}

 In this section some sufficient conditions for the completeness of the SEAF
 of the problems \eqref{3}, \eqref{5} and \eqref{3}, \eqref{6}
 in $L^2[0,1]\oplus L^2[0,1]$ are established.
 The starting point is to estimate the growth of the solution
 of the Cauchy problem for the system \eqref{3} with
 special initial conditions.

 Let
 \begin{equation} \label{7}
 \vek\f_\alpha(x;\lambda)=\binom{\f_{\alpha 1}(x;\lambda)}{\f_{\alpha 2}(x;\lambda)} \quad \text{ and }
 \quad \vek\psi_\alpha(x;\lambda)=\binom{\psi_{\alpha 1}(x;\lambda)}{\psi_{\alpha 2}(x;\lambda)}
 \end{equation}
 be the solutions of the Cauchy problem for the system \eqref{3} with
 the initial conditions
 \begin{equation} \label{8}
 {\f_{\alpha 1}(\alpha;\lambda)}={\psi_{\alpha 2}(\alpha;\lambda)}=1 \quad \text{ and } \quad
 {\f_{\alpha 2}(\alpha;\lambda)}={\psi_{\alpha 1}(\alpha;\lambda)}=0,
 \end{equation}
 where $\alpha\in[0,1].$
 The next lemma gives some estimates for the growth of
 $\f_{0j}(x;\lambda)$ and $\psi_{0j}(x;\lambda)$, $j=1,2$.

 \begin{lemma}
 \label{lem.1}
 The functions $\f_{0j}(x;\lambda)$ and $\psi_{0j}(x;\lambda)$, $j=0,1$, satisfy the estimates (as $\lambda\to\infty$)
 \begin{equation}\label{11}
 \begin{split}
 & \f_{01}(x;\lambda)=(1+O(\tfrac1{\Im\lambda}))\exp(a\lambda ix),\quad
   \f_{02}(x;\lambda)=O(\tfrac1{\Im\lambda})\exp(a\lambda ix); \\
 &\psi_{01}(x;\lambda)=O(\tfrac1{\Im\lambda})\exp(a\lambda ix), \quad
  \psi_{02}(x;\lambda)=O(\tfrac1{\Im\lambda})\exp(a\lambda ix);
 \end{split}
 \end{equation}
 when $\lambda\in\dC^+$, and the estimates
 \begin{equation}\label{12}
 \begin{split}
 &\f_{01}(x;\lambda)=O(\tfrac1{\Im\lambda})\exp(b\lambda ix), \quad
  \f_{02}(x;\lambda)=O(\tfrac1{\Im\lambda})\exp(b\lambda ix);  \\
 &\psi_{01}(x;\lambda)=O(\tfrac1{\Im\lambda})\exp(b\lambda ix), \quad
  \psi_{02}(x;\lambda)=(1+O(\tfrac1{\Im\lambda}))\exp(b\lambda ix);
 \end{split}
 \end{equation}
 when $\lambda\in\dC^-$.
 \end{lemma}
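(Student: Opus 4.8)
The plan is to convert the Cauchy problem \eqref{3}, \eqref{8} into an equivalent Volterra integral equation and to analyse it by successive approximations, after factoring out the dominant exponential so that the resulting kernel stays bounded as $\lambda\to\infty$ in each half-plane. Writing \eqref{3} as $y'=i\lambda B^{-1}y-iB^{-1}Q(x)y-iB^{-1}\int_0^xM(x,t)y(t)\,dt$ with $B^{-1}=\diag(a,b)$, the unperturbed system $y'=i\lambda B^{-1}y$ has fundamental matrix $E(x;\lambda)=\diag(e^{ia\lambda x},e^{ib\lambda x})$ with $E(0;\lambda)=I$. Variation of parameters then turns \eqref{3} into
\begin{equation*}
y(x)=E(x;\lambda)y(0)-i\int_0^xK(x,s;\lambda)B^{-1}\Big(Q(s)y(s)+\int_0^sM(s,t)y(t)\,dt\Big)ds,
\end{equation*}
where $K(x,s;\lambda)=E(x;\lambda)E(s;\lambda)^{-1}=\diag(e^{ia\lambda(x-s)},e^{ib\lambda(x-s)})$. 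For $\vek\f_0$ one takes $y(0)=\binom{1}{0}$ and for $\vek\psi_0$ one takes $y(0)=\binom{0}{1}$.

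First I would treat the case $\lambda\in\dC^+$, where $|e^{ia\lambda x}|=e^{-a\,\Im\lambda\,x}$ is the dominant (growing) exponential since $a<0$. Substituting $y(x)=e^{ia\lambda x}u(x)$ and dividing by $e^{ia\lambda x}$ produces an integral equation for $u$ whose kernel entries are $1$ and $e^{i(b-a)\lambda(x-s)}$, both of modulus $\le1$ for $0\le s\le x$ because $b-a>0$ and $\Im\lambda>0$; the memory term transforms likewise into a uniformly bounded double-integral kernel. Since the operator is of Volterra type with a kernel bounded uniformly in $\lambda\in\dC^+$, its iterates decay factorially and the Neumann series converges; this yields existence of $u$ together with the a priori bound $\|u\|_{C[0,1]}\le C$ uniformly in $\lambda\in\dC^+$, hence $|y(x)|\le Ce^{-a\,\Im\lambda\,x}$.

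Next I would extract the leading asymptotics by iterating once. For $\vek\f_0$ the zeroth term is $u^{(0)}=\binom{1}{0}$, giving $\f_{01}=(1+O(\tfrac1{\Im\lambda}))e^{ia\lambda x}$; the dominant component acquires the correction $-i\int_0^x a\,q_1(s)u_2(s)\,ds$, which inherits the order of the subdominant component $u_2$, while $u_2$ is, to leading order, $-i\int_0^x b\,q_2(s)e^{i(b-a)\lambda(x-s)}\,ds$ together with the memory contribution. The subdominant initial data for $\vek\psi_0$, namely $\binom{0}{e^{i(b-a)\lambda x}}$, is itself $O(\tfrac1{\Im\lambda})$ relative to $e^{ia\lambda x}$ on any $[\delta,1]$, which explains why both components of $\vek\psi_0$ carry the factor $O(\tfrac1{\Im\lambda})$. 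The estimates \eqref{12} for $\lambda\in\dC^-$ follow by the symmetric argument, now factoring out the dominant exponential $e^{ib\lambda x}$ (since $b>0$, $\Im\lambda<0$) and interchanging the roles of the two components.

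The main obstacle is the sharp $O(\tfrac1{\Im\lambda})$ control of the correction integrals. The memory contribution is the easy part: since $M_{ij}\in L_\infty(\Omega)$, the bounded density integrated against the decaying exponential $e^{-(b-a)\Im\lambda(x-s)}$, which concentrates in a strip of width $\sim\tfrac1{\Im\lambda}$ near $s=x$, immediately produces a factor $\tfrac1{\Im\lambda}$. The delicate part is the $Q$-coupling integral $\int_0^x q_2(s)e^{i(b-a)\lambda(x-s)}\,ds$: to obtain $O(\tfrac1{\Im\lambda})$ rather than merely $o(1)$ one must use more than the bare $L_1$ bound, either integrating by parts against the regularity of $q_j$ or splitting the interval at distance $\sim\tfrac1{\Im\lambda}$ from the endpoint and estimating the two pieces separately; this then feeds back to make the correction of $\f_{01}$ of order $\tfrac1{\Im\lambda}$ as well. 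Once the rate is secured for the first iterate, the higher Neumann terms are handled by the same mechanism and are of the same or smaller order, which closes the estimates \eqref{11} and \eqref{12}.
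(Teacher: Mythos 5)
Your route is genuinely different from the paper's. The paper does not iterate the differential equation at all: it quotes from \cite{Mal} the triangular transformation operator, i.e.\ the representation
\begin{equation*}
\vek\f_0(x;\lambda)=\binom{e^{ia\lambda x}}{0}+\int_0^xK(x,t)\binom{e^{ia\lambda t}}{e^{ib\lambda t}}\,dt
\end{equation*}
with a $\lambda$-independent kernel $K\in L_\infty(\Omega)\otimes\dC^{2\times 2}$, after which \eqref{11} drops out of two elementary bounds, $\int_0^xe^{-a(\Im\lambda)t}\,dt=O(\tfrac1{\Im\lambda})e^{-a(\Im\lambda)x}$ and $\int_0^xe^{-b(\Im\lambda)t}\,dt=O(\tfrac1{\Im\lambda})$. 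Your variation-of-parameters plus Neumann-series scheme is sound as far as it goes: it yields the uniform bound $\|u\|_\infty\le C$, hence $y_j(x)=O(1)\exp(a\lambda ix)$ in $\dC^+$, and your treatment of the memory term (which is integrated against a genuinely bounded density $M_{ij}\in L_\infty$) is correct.

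However, at the step you yourself single out as delicate there is a genuine gap, and under the actual hypothesis of Lemma \ref{lem.1} ($q_j\in L_1[0,1]$ only) neither of your proposed remedies closes it. Integration by parts requires regularity of $q_j$ that is not assumed here; differentiability of $Q$ is precisely the \emph{additional} hypothesis of Lemma \ref{lem.2}, so invoking it would prove the wrong lemma. Splitting the convolution $\int_0^x e^{-(b-a)\Im\lambda(x-s)}|q_2(s)|\,ds$ at distance $\delta\sim\tfrac1{\Im\lambda}$ from $s=x$ leaves the near piece $\int_{x-\delta}^x|q_2(s)|\,ds$, and for an $L_1$ function this tends to zero with no rate at all: for $q_2(s)=(x_0-s)^{-1/2}$ (extended by zero past $x_0$) it is of exact order $(\Im\lambda)^{-1/2}$. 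Nor is this an artifact of stopping at the first iterate: taking $q_1\equiv 0$ and $M\equiv 0$ the system decouples and solves explicitly,
\begin{equation*}
\f_{02}(x;\lambda)=-ib\,e^{ia\lambda x}\int_0^xe^{i(b-a)\lambda(x-s)}q_2(s)\,ds,
\end{equation*}
which at $x=x_0$ is genuinely of order $(\Im\lambda)^{-1/2}\exp(a\lambda ix_0)$, not $O(\tfrac1{\Im\lambda})\exp(a\lambda ix_0)$. So no rearrangement or resummation of your Neumann series can extract the rate $\tfrac1{\Im\lambda}$ from the bare $L_1$ bound; the missing ingredient is exactly what the paper imports from \cite{Mal}, namely the boundedness of the transformation kernel $K$, which is a nontrivial theorem and not recoverable from $q_j\in L_1$ alone. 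Your computation in fact exposes a real tension in the statement itself: with $q_j$ merely in $L_1$ only the qualitative estimate $o(1)$ (the form used in \cite{Mal+Or} and in Lemma \ref{lem.Wr}) is available, and the quantitative rate $O(\tfrac1{\Im\lambda})$ requires the stronger kernel-boundedness (or a boundedness assumption on $Q$) that the paper's proof silently relies on.
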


 \begin{proof}
We prove the first of the estimates in \eqref{11}. All the other
estimates can be proved similarly.

According to \cite{Mal} the system \eqref{3} admits a triangular
transformation operator. This means that the solution $\varphi_0$
admits a representation
        \begin{equation} \label{l1.1}
 \varphi_0(x;\lambda)=\binom{e^{ia\lambda x}}{0}+
 \int_0^xK(x,t)\binom{e^{ia\lambda t}}{e^{ib\lambda t}}\,dt,
 \end{equation}
 where $K(x,t):=
 \begin{pmatrix}
 K_{11}(x,t) & K_{12}(x,t) \\
 K_{21}(x,t) & K_{22}(x,t)
 \end{pmatrix}
 \in L_\infty(\Omega)\otimes \mathbb{C}^{2\times 2}$.
In particular,
 \begin{equation*}
 \varphi_{01}(x;\lambda)=e^{ia\lambda x}+
 \int_0^xK_{11}(x,t)e^{ia\lambda t}\,dt+
 \int_0^xK_{12}(x,t)e^{ib\lambda t}\,dt.
 \end{equation*}

If $\Im\lambda >0$, then $|e^{ib\lambda t}|<1$ and
$\int_0^xK_{12}(x,t)e^{ib\lambda t}\,dt=O(1)$, since $a<0<b$ and
$K_{12}(x,t)$ is bounded, and then, in particular,
$\int_0^xK_{12}(x,t)e^{ib\lambda
t}\,dt=O(\tfrac1{\Im\lambda})\exp(a\lambda
 ix)$.

 Moreover, since $K_{11}(x,t)$ is bounded, one has
 \begin{equation*}
 \int_0^xK_{11}(x,t)e^{ia\lambda t}\,dt=O\left(\int_0^x |e^{ia\lambda t}|\,dt\right)=
 O\left(\int_0^x e^{-a(\Im\lambda) t}\,dt\right)=
 O\left(\left|\frac{1}{\Im\lambda}\right|\right)e^{ia\lambda x}.
 \end{equation*}
Therefore, for  $\Im\lambda >0$ one gets
 \begin{equation*}
 \varphi_{01}(x;\lambda)=e^{ia\lambda x}+
 O\left(\left|\frac{e^{ia\lambda x}}{\Im\lambda}\right|\right)=
 \left(1+O\left(\frac1{\Im\lambda }\right)\right)e^{ia\lambda x}.
 \end{equation*}

 \end{proof}

 If the function $Q(x)$ is differentiable, the above
 estimates can be strengthened as follows.

 \begin{lemma} \label {lem.2}
 Let $Q(x)$ be differentiable. Then the functions
  $\vek\f_{j}(x;\lambda)$ and $\vek\psi_{j}(x;\lambda)$, $j \in \{ 0,1 \}$
 satisfy the estimates
 \begin{equation} \label{l2.1}
 \begin{split}
 \f_{01}(x;\lambda)&=\exp(a\lambda ix)+\tfrac1{\lambda} O(\exp(a\lambda ix)), \quad
 \f_{02}(x;\lambda)=\tfrac1{\lambda} O(\exp(a\lambda ix));
  \\
 \psi_{01}(x;\lambda)&=\tfrac1{\lambda} O(\exp(a\lambda ix)), \quad
 \psi_{02}(x;\lambda)=\tfrac1{\lambda} O(\exp(a\lambda ix));
  \\  \\
 \f_{11}(x;\lambda)&=\tfrac1{\lambda} O(\exp(b\lambda i(x-1))), \quad
 \f_{12}(x;\lambda)=\tfrac1{\lambda} O(\exp(b\lambda i(x-1)));
  \\
 \psi_{11}(x;\lambda)&=\tfrac1{\lambda} O(\exp(b\lambda i(x-1))), \quad
 \psi_{12}(x;\lambda)=\exp(b\lambda i(x-1))+
                  \tfrac1{\lambda} O(\exp(b\lambda i(x-1))),
 \end{split}
 \end{equation}
 when $\lambda\in\dC^+$, and the estimates
 \begin{equation} \label{l2.2}
 \begin{split}
 \f_{01}(x;\lambda)&=\tfrac1{\lambda} O(\exp(b\lambda ix)), \quad
 \f_{02}(x;\lambda)=\tfrac1{\lambda} O(\exp(b\lambda ix));
  \\
 \psi_{01}(x;\lambda)&=\tfrac1{\lambda} O(\exp(b\lambda ix)), \quad
 \psi_{02}(x;\lambda)=\exp(b\lambda ix)+\tfrac1{\lambda} O(\exp(b\lambda ix));
  \\  \\
 \f_{11}(x;\lambda)&=\exp(a\lambda i(x-1))+
               \tfrac1{\lambda} O(\exp(a\lambda i(x-1))), \quad
 \f_{12}(x;\lambda)=\tfrac1{\lambda} O(\exp(a\lambda i(x-1)));
  \\
 \psi_{11}(x;\lambda)&=\tfrac1{\lambda} O(\exp(a\lambda i(x-1))), \quad
 \psi_{12}(x;\lambda)=\tfrac1{\lambda} O(\exp(a\lambda i(x-1))),
 \end{split}
 \end{equation} when $\lambda\in\dC^-$.
 \end{lemma}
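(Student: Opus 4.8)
The plan is to follow the same route as in the proof of Lemma~\ref{lem.1}, upgrading each step by one integration by parts that the differentiability of $Q$ now makes available. As in Lemma~\ref{lem.1} it suffices to establish a single estimate, say the first one in \eqref{l2.1}, since all the remaining ones follow by the symmetries of the system: interchanging the two components together with $a\leftrightarrow b$ and $\dC^+\leftrightarrow\dC^-$ turns the $\varphi_0$--estimates into the $\psi_0$--estimates, and replacing the base point $\alpha=0$ by $\alpha=1$ (so that the natural exponentials become $e^{ib\lambda(x-1)}$ and $e^{ia\lambda(x-1)}$) turns them into the estimates for $\vek\f_{1}$ and $\vek\psi_{1}$.

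First I would recall from \cite{Mal} the triangular transformation operator already used in Lemma~\ref{lem.1}, writing
\[
\varphi_{01}(x;\lambda)=e^{ia\lambda x}+\int_0^x K_{11}(x,t)e^{ia\lambda t}\,dt+\int_0^x K_{12}(x,t)e^{ib\lambda t}\,dt .
\]
The new input is that when $Q$ is differentiable the kernel $K(x,\cdot)$ is differentiable in $t$, so that each integral may be integrated by parts. In the first integral this yields the boundary term $\tfrac{1}{ia\lambda}\bigl(K_{11}(x,x)e^{ia\lambda x}-K_{11}(x,0)\bigr)$ together with $-\tfrac{1}{ia\lambda}\int_0^x\partial_tK_{11}(x,t)e^{ia\lambda t}\,dt$, where it is essential that $a\neq 0$. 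For $\lambda\in\dC^+$ one has $|e^{ia\lambda x}|=e^{-a(\Im\lambda)x}\ge 1$, so the boundary term is $\tfrac1\lambda O(e^{ia\lambda x})$.

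The remainder integrals are controlled by the same exponential bookkeeping as in Lemma~\ref{lem.1}, but now I would use the \emph{uniform} inequality $\int_0^x |e^{ia\lambda t}|\,dt=\int_0^x e^{-a(\Im\lambda)t}\,dt\le x\,e^{-a(\Im\lambda)x}\le |e^{ia\lambda x}|$, valid for every $\lambda\in\dC^+$ (rather than the sharper $O(1/\Im\lambda)$ bound), which keeps the estimate uniform up to the real axis; combined with the prefactor $\tfrac1\lambda$ this gives $\tfrac1\lambda O(e^{ia\lambda x})$. For the second integral the exponent $b>0$ makes $|e^{ib\lambda t}|\le 1\le |e^{ia\lambda x}|$ throughout $\dC^+$, so its boundary and remainder terms are again $\tfrac1\lambda O(e^{ia\lambda x})$; the distinctness $a\neq b$ is exactly what guarantees that this "wrong" exponential never resonates with $e^{ia\lambda x}$. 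Collecting the terms yields $\varphi_{01}(x;\lambda)=e^{ia\lambda x}+\tfrac1\lambda O(e^{ia\lambda x})$, as claimed.

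The step I expect to be genuinely delicate, and the one where the hypothesis must be used with care, is the justification that $K(x,\cdot)$ is differentiable in $t$, i.e.\ that the gain of the factor $1/\lambda$ survives the Volterra memory term $\int_0^xM(x,t)y(t)\,dt$, whose kernel is assumed only bounded. A naive estimate of this term reproduces merely the $O(1/\Im\lambda)$ bound of Lemma~\ref{lem.1}, so one must read its contribution through the smoothness of the transformation kernel furnished by \cite{Mal}, or, equivalently, run a direct bootstrap on the variation-of-parameters integral equations in which the off-diagonal coupling through $q_1,q_2$ is integrated by parts against the non-resonant factor $e^{i(a-b)\lambda t}$ (legitimate precisely because $a\neq b$ and $Q$ is differentiable). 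In either route it is the differentiability of $Q$ that converts one power of $\Im\lambda$ into one power of $\lambda$.
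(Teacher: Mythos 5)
Your proposal is correct and follows essentially the same route as the paper: the triangular transformation operator representation from \cite{Mal}, the fact (also from \cite{Mal}) that $Q\in C^1$ implies $K_{ij}\in C^1(\Omega)$, one integration by parts in each integral, and the sign conditions $a<0<b$ to absorb all terms into $\tfrac1\lambda O(\exp(a\lambda ix))$ for $\lambda\in\dC^+$. The only cosmetic difference is that you bound the remainder integral by the uniform inequality $\int_0^x e^{-a(\Im\lambda)t}\,dt\le x\,e^{-a(\Im\lambda)x}$ while the paper evaluates it explicitly, and you flag the kernel smoothness as the delicate point where the paper simply cites \cite{Mal}; both choices lead to the identical argument.
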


 \begin{proof}
 As in Lemma \ref{lem.1} we just prove the first of the estimates in \eqref{l2.1}.
 For this purpose the formula \eqref{l1.1} in the proof of Lemma \ref{lem.1} will be used.
 In \cite{Mal} it was show that if $q\in C^1[0,1]$ then $K_{ij}(x,t)\in C^1(\Omega),$ too.
 Now integration by parts yields

 \begin{multline*}
 \int_0^xK_{11}(x,t)e^{ia\lambda t}\,dt=
 \frac 1{ia\lambda} \left(\left.K_{11}(x,t)e^{ia\lambda t}\right|_{t=0}^x-
 \int_0^x \frac{\partial }{\partial t}K_{11}(x,t)e^{ia\lambda t}\,dt\right) \\
=\frac{K_{11}(x,x)e^{ia\lambda x}-K_{11}(x,0)}{ia\lambda}-
 \frac 1{ia\lambda} O\left(\left|\frac{e^{-a(\Im\lambda) x}-1}{\Im\lambda}\right|\right)=
 \frac 1{\lambda} O\left(\left|{e^{ia\lambda x}}\right|\right),
 \end{multline*}
and
 \begin{multline*}
 \int_0^xK_{12}(x,t)e^{ib\lambda t}\,dt=
 \frac 1{ib\lambda} \left(\left.K_{12}(x,t)e^{ib\lambda t}\right|_{t=0}^x-
 \int_0^x \frac{\partial }{\partial t}K_{12}(x,t)e^{ib\lambda t}\,dt\right) \\
 =\frac 1{ib\lambda} O(1)= \frac 1{\lambda}O\left(\left|{e^{ia\lambda x}}\right|\right),
 \end{multline*}
 since $\Im \lambda>0$ and $a<0<b$.

 Therefore from the formula \eqref{l1.1} one gets
 \begin{equation*}
 \f_{01}(x;\lambda)=\exp(a\lambda ix)+\tfrac1{\lambda} O(\exp(a\lambda ix)).
 \end{equation*}

 All the other estimates in Lemma \ref{lem.2} are proved in a  similar manner.
 \end{proof}

 %
 %

In the next lemma some estimates for the growth of the Wronski
determinant are presented.

 \begin{lemma} \label{lem.Wr}
Let $y_1(x;\lambda)$ and $y_2(x;\lambda)$ be two linearly
independent solutions of the system \eqref{3}. Then the Wronski
determinant
   \begin{equation} \label{Wr}
   W(x;\lambda)=\det
   \begin{pmatrix}
   y_{11}(x;\lambda) & y_{21}(x;\lambda) \\
   y_{12}(x;\lambda) & y_{22}(x;\lambda)
   \end{pmatrix}
   \end{equation}
admits the following  estimate
   \begin{equation} \label {lemWr.1}
   W(x;\lambda)=(1+o(1))\exp((a+b)\lambda i)W(0;\lambda).
   \end{equation}
 \end{lemma}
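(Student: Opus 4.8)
The plan is to turn the integro-differential system \eqref{3} into a first-order linear system, extract the exponential rate from the trace of its principal part, and treat the integral kernel $M$ as a perturbation. First I would write \eqref{3} in resolved form
\[
y'=A(x;\lambda)y-iB^{-1}\int_0^xM(x,t)y(t)\,dt,\qquad A(x;\lambda):=i\lambda B^{-1}-iB^{-1}Q(x),
\]
where $B^{-1}=\diag(a,b)$. Since $Q$ is off-diagonal, $B^{-1}Q$ has zero trace, so $\operatorname{tr}A(x;\lambda)=i(a+b)\lambda$ for every $x$. Let $Y(x;\lambda)$ be the $2\times2$ matrix whose columns are the two given solutions, so that $W=\det Y$ as in \eqref{Wr}. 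Applying Jacobi's formula $\frac{d}{dx}\det Y=\det Y\cdot\operatorname{tr}(Y^{-1}Y')$ together with $Y'=AY+G$, $G(x)=-iB^{-1}\int_0^xM(x,t)Y(t)\,dt$, the potential $Q$ contributes nothing and one obtains the scalar identity
\[
W'(x;\lambda)=i(a+b)\lambda\,W(x;\lambda)+R(x;\lambda),\qquad R:=\operatorname{tr}\bigl(\operatorname{adj}Y\cdot G\bigr),
\]
in which $R$ collects exactly the contribution of the integral kernel $M$.

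Solving this linear ODE by variation of parameters yields
\[
W(x;\lambda)=e^{i(a+b)\lambda x}W(0;\lambda)+\int_0^xe^{i(a+b)\lambda(x-s)}R(s;\lambda)\,ds,
\]
so that when $M\equiv0$ the asserted estimate holds \emph{exactly}, with factor $1$ rather than $1+o(1)$: the entire content of the lemma is to show that the remainder integral is $o(1)$ relative to $e^{i(a+b)\lambda x}W(0;\lambda)$. To bound $R$ I would first observe that, since the Wronskian of any two solutions equals $\det(C)$ times that of the canonical pair $\vek\f_0,\vek\psi_0$ for a constant matrix $C$, the ratio $W(x;\lambda)/W(0;\lambda)$ is independent of the chosen solutions; hence one may take $y_1=\vek\f_0$, $y_2=\vek\psi_0$, for which $W(0;\lambda)=1$ and Lemma \ref{lem.1} supplies the growth of every entry of $Y$. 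Inserting these bounds and $\|M\|_{L_\infty}<\infty$ into $R$, and then closing the resulting inequality for $e^{-i(a+b)\lambda x}W(x;\lambda)$ by a Gronwall argument, should produce a correction of order $o(1)$.

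The main obstacle is precisely this last estimate. Because the integral term is nonlocal, $R(s;\lambda)$ depends on the solutions over all of $[0,s]$, and the crude bounds from Lemma \ref{lem.1} mix the growing and decaying exponentials $e^{ia\lambda s}$ and $e^{ib\lambda s}$; a term such as $\vek\f_{01}(s)\int_0^sM_{22}(s,t)\psi_{02}(t)\,dt$ pairs the dominant (growing) factor with a subdominant one, and without care its contribution to the remainder integral competes with the main term. The delicate point is therefore to exploit the cancellation encoded in the determinant structure of $R$, keeping track of which exponential weight accompanies each factor and verifying that every surviving contribution carries a genuine negative power of $\Im\lambda$ (equivalently of $\lambda$) as $\lambda\to\infty$ in $\dC^+$, and symmetrically in $\dC^-$. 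I expect this weighted bookkeeping for the nonlocal term, rather than the algebraic identity for $W'$, to be where the real work lies.
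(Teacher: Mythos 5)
Your algebraic setup (Jacobi's formula, $\operatorname{tr}A=i(a+b)\lambda$, Duhamel) is correct, but it only relocates the lemma; the entire content is the estimate of the remainder, and that step is not done --- you explicitly defer it ("should produce", "I expect ... to be where the real work lies"). Moreover, the route you sketch for it demonstrably fails. Take $\lambda\in\dC^+$ and $Y=(\vek\f_0,\vek\psi_0)$, so $W(0;\lambda)=1$. By Lemma \ref{lem.1} every entry of $Y(t)$ is only known to be $O(e^{|a|t\Im\lambda})$ (the estimates \eqref{11} give no better than this, and they are not even uniform near $t=0$, where $\psi_{02}\approx 1$). Hence any entrywise bound gives $|R(s;\lambda)|\lesssim e^{2|a|s\Im\lambda}/\Im\lambda$, and the Duhamel integral is then bounded only by
\begin{equation*}
\int_0^x \bigl|e^{i(a+b)\lambda(x-s)}\bigr|\,\frac{e^{2|a|s\Im\lambda}}{\Im\lambda}\,ds
\;=\;O\!\left(\frac{e^{2|a|x\Im\lambda}}{(\Im\lambda)^2}\right),
\end{equation*}
whereas the main term has modulus $e^{(|a|-b)x\Im\lambda}$; the ratio is $e^{(|a|+b)x\Im\lambda}/(\Im\lambda)^2\to\infty$. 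The point is that in $\dC^+$ \emph{both} columns $\vek\f_0,\vek\psi_0$ are of size $e^{|a|x\Im\lambda}$, so the smallness of their Wronskian rests on a cancellation between $\f_{01}\psi_{02}$ and $\f_{02}\psi_{01}$ that entrywise bounds cannot see --- and $R$ contains exactly such products. Your proposed Gronwall closure is also structurally unavailable: $R$ depends on the entries of $Y$ on $[0,s]$, not on $\det Y$, so there is no inequality in $e^{-i(a+b)\lambda x}W(x;\lambda)$ alone to close.

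The paper's proof sidesteps the perturbation argument entirely. It invokes the result of \cite{Mal+Or} that in each half-plane $\dC^\pm$ the system \eqref{3} possesses a fundamental pair $\varepsilon_1,\varepsilon_2$ with the \emph{diagonal-dominant} asymptotics \eqref{lemWr.2}: one column behaves like $(1+o(1))e^{ia\lambda x}$ with $o(1)e^{ia\lambda x}$ in the other entry, the second like $(1+o(1))e^{ib\lambda x}$ with small first entry. Writing $Y(x)=E(x)E(0)^{-1}Y(0)$ and taking determinants, the cross terms in $\det E(x)$ are $o(1)e^{i(a+b)\lambda x}$ by inspection, so $\det E(x)=(1+o(1))e^{i(a+b)\lambda x}$ and the lemma follows with no Gronwall and no cancellation to track. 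In other words, the missing ingredient in your argument is precisely the existence of a basis with one genuinely growing and one genuinely decaying column in each half-plane (the content of \eqref{lemWr.2}); with respect to such a basis the determinant asymptotics are trivial, while with respect to $\vek\f_0,\vek\psi_0$ they are invisible to the available bounds. If you want to salvage your approach you would have to prove the existence of $\varepsilon_1,\varepsilon_2$ first --- at which point the Abel-formula machinery becomes unnecessary.
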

 \begin{proof}
It was proved in \cite{Mal+Or} that in $\mathbb{C}_{\pm}$ the system
\eqref{3} has two linearly independent solutions
$\varepsilon_1(x,\lambda)$ and $\varepsilon_2(x,\lambda)$ satisfying
the estimates
   \begin{equation} \label {lemWr.2}
   \varepsilon_1(x,\lambda)=\binom{(1+o(1))exp(a\lambda ix)}{o(1)exp(a\lambda ix)}
   \quad \text{ and } \quad
   \varepsilon_2(x,\lambda)=\binom{o(1)exp(b\lambda ix)}{(1+o(1))exp(b\lambda
   ix)}
   \end{equation}
for $\lambda \in \mathbb{C}_{\pm}$.

Since $y_j(x;\lambda)$ is a linear combination of
$\varepsilon_1(x,\lambda)$ and $\varepsilon_2(x,\lambda)$, one has
    \begin{multline}
    \label{lemWr.3}
   \begin{pmatrix}
   y_{11}(x;\lambda) & y_{21}(x;\lambda) \\
   y_{12}(x;\lambda) & y_{22}(x;\lambda)
   \end{pmatrix}
   = \\
   \begin{pmatrix}
   \varepsilon_{11}(x;\lambda) & \varepsilon_{21}(x;\lambda) \\
   \varepsilon_{12}(x;\lambda) & \varepsilon_{22}(x;\lambda)
   \end{pmatrix}
   \begin{pmatrix}
   \varepsilon_{11}(0;\lambda) & \varepsilon_{21}(0;\lambda) \\
   \varepsilon_{12}(0;\lambda) & \varepsilon_{22}(0;\lambda)
   \end{pmatrix}
     ^{-1}
   \begin{pmatrix}
   y_{11}(0;\lambda) & y_{21}(0;\lambda) \\
   y_{12}(0;\lambda) & y_{22}(0;\lambda)
   \end{pmatrix}.
   \end{multline} 
It follows that
   \begin{multline} \label{lemWr.4}
   W(x;\lambda)=\det
   \begin{pmatrix}
   \varepsilon_{11}(x;\lambda) & \varepsilon_{21}(x;\lambda) \\
   \varepsilon_{12}(x;\lambda) & \varepsilon_{22}(x;\lambda)
   \end{pmatrix}
     \det
   \begin{pmatrix}
   \varepsilon_{11}(0;\lambda) & \varepsilon_{21}(0;\lambda) \\
   \varepsilon_{12}(0;\lambda) & \varepsilon_{22}(0;\lambda)
   \end{pmatrix}
     ^{-1}
   W(0;\lambda) \\
   =(1+o(1))\exp((a+b)\lambda i)W(0;\lambda)
   \end{multline}
which gives the required estimate \eqref{lemWr.1}.
          \end{proof}

 %
 %

 The function $\chi(\lambda)$ defined by
 \begin{equation}
 \label{27}
 \chi(\lambda):=\det
 \begin{pmatrix}
 Q_{11}(\lambda) & Q_{12}(\lambda) \\
 Q_{21}(\lambda) & Q_{22}(\lambda)
 \end{pmatrix},
 \end{equation}
 where
 \begin{equation}
 \begin{split}
 Q_{11}(\lambda)=P_{11}(\lambda)+P_{13}(\lambda)\f_{01}(1;\lambda)+P_{14}(\lambda)\f_{02}(1;\lambda),     \\
 Q_{12}(\lambda)=P_{12}(\lambda)+P_{13}(\lambda)\psi_{01}(1;\lambda)+P_{14}(\lambda)\psi_{02}(1;\lambda), \\
 Q_{21}(\lambda)=P_{21}(\lambda)+P_{23}(\lambda)\f_{01}(1;\lambda)+P_{24}(\lambda)\f_{02}(1;\lambda),     \\
 Q_{22}(\lambda)=P_{22}(\lambda)+P_{23}(\lambda)\psi_{01}(1;\lambda)+P_{24}(\lambda)\psi_{02}(1;\lambda),
 \end{split}
 \end{equation}
 is said to be \textit{the characteristic function}
 of the problem \eqref{3}, \eqref{5}.
 This definition is motivated by the next result.

 \begin{proposition}
 \label{SEAF}
 The number $\lambda_0\in\dC$ is an eigenvalue
 of the operator associated to the problem \eqref{3}, \eqref{5}
 if and only if $\chi(\lambda_0)=0$.
 Moreover, the functions $\vek{\omega_1}(x;\lambda)$ and $\vek{\omega_2}(x;\lambda)$
 given by
 \begin{equation}
 \label{28a}
 \vek{\omega_1}(x;\lambda)=Q_{12}(\lambda)\f_0(x;\lambda)
      -Q_{11}(\lambda)\psi_0(x;\lambda)
 \end{equation}
 and
 \begin{equation}
 \label{29a}
 \vek{\omega_2}(x;\lambda)=Q_{22}(\lambda)\f_0(x;\lambda)
  -Q_{21}(\lambda)\psi_0(x;\lambda)
 \end{equation}
 are the eigenfunctions corresponding to the eigenvalue $\lambda_0,$ or,
 one has $\vek{\omega_j}(x;\lambda)\equiv 0$.
 Moreover, all the eigenfunctions and associate functions corresponding to
 the eigenvalue $\lambda_0$ are the nonzero functions of the form
 \begin{equation}
 \label{32a}
 \left.\frac 1{k!}\frac{\partial^k}{\partial\lambda^k}\omega_j(x;\lambda)
 \right|_{\lambda=\lambda_0},\quad\text{ where } 0\le k<p_j,\quad j=1,2.
 \end{equation}
 \end{proposition}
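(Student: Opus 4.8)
The plan is to reduce the $\lambda$-depending boundary value problem to a $2\times 2$ linear algebraic system whose coefficient matrix is exactly $(Q_{ij}(\lambda))$ from \eqref{27}. Since $\{\f_0,\psi_0\}$ is a fundamental system for the Cauchy problem \eqref{3}, \eqref{8}, every solution of \eqref{3} has the form $y=c_1\f_0(\cdot;\lambda)+c_2\psi_0(\cdot;\lambda)$ with $c_1=y_1(0)$ and $c_2=y_2(0)$. First I would substitute $y_1(0)=c_1$, $y_2(0)=c_2$ and $y_k(1)=c_1\f_{0k}(1;\lambda)+c_2\psi_{0k}(1;\lambda)$ into the boundary conditions \eqref{5}; collecting the coefficients of $c_1$ and $c_2$ turns \eqref{5} into the system $(Q_{ij}(\lambda))\binom{c_1}{c_2}=0$. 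A nontrivial solution, i.e. a nonzero eigenfunction, exists precisely when $\det(Q_{ij}(\lambda_0))=\chi(\lambda_0)=0$, which gives the first assertion.

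For the eigenfunctions, when $\chi(\lambda_0)=0$ the matrix $(Q_{ij}(\lambda_0))$ is singular, and the vectors $(Q_{12},-Q_{11})$ and $(Q_{22},-Q_{21})$ lie in its kernel: the first row annihilates $(Q_{12},-Q_{11})$ identically, while the second row gives $Q_{21}Q_{12}-Q_{22}Q_{11}=-\chi(\lambda_0)=0$, and symmetrically for the other vector. Feeding these kernel vectors into $y=c_1\f_0+c_2\psi_0$ reproduces exactly $\vek{\omega_1}$ and $\vek{\omega_2}$ from \eqref{28a}, \eqref{29a}, so each is either a genuine eigenfunction or vanishes identically (the latter only when $Q_{11}=Q_{12}=0$, respectively $Q_{21}=Q_{22}=0$); if the matrix is nonzero, at least one is a true eigenfunction.

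The engine for the associated-function count is the set of exact identities, valid for all $\lambda$,
\[
U_1(\lambda)\vek{\omega_1}(\lambda)\equiv 0,\quad U_2(\lambda)\vek{\omega_1}(\lambda)=-\chi(\lambda),\quad U_1(\lambda)\vek{\omega_2}(\lambda)=\chi(\lambda),\quad U_2(\lambda)\vek{\omega_2}(\lambda)\equiv 0,
\]
where $U_1(\lambda)y$, $U_2(\lambda)y$ denote the two boundary forms in \eqref{5}. Since $\vek{\omega_j}(x;\lambda)$ solves \eqref{3} identically in $\lambda$, expanding the differential equation in powers of $\lambda-\lambda_0$ shows that its Taylor coefficients $w_k^{(j)}:=\frac1{k!}\partial_\lambda^k\vek{\omega_j}|_{\lambda_0}$ automatically satisfy the differential chain relations $(L-\lambda_0)w_k^{(j)}=w_{k-1}^{(j)}$ (with $w_{-1}^{(j)}:=0$), where $L$ is the integro-differential expression on the left of \eqref{3}. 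Expanding the four boundary identities in the same way, the boundary chain conditions $\sum_{p=0}^{k}\frac1{p!}U_i^{(p)}(\lambda_0)w_{k-p}^{(j)}=0$ hold for both $i=1,2$ exactly as long as the matching Taylor coefficient of $\chi$ vanishes; hence the chain generated by $\vek{\omega_j}$ has length equal to the order $p_j$ of the zero of $\chi$ at $\lambda_0$ (shifted by the order of vanishing of $\vek{\omega_j}$ itself in the degenerate case). This confirms that the functions \eqref{32a} are bona fide eigen- and associated functions.

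Finally, to show that \eqref{32a} exhausts all root functions, I would argue by dimension. Any chain of eigen- and associated functions comes from a holomorphic solution $y(\lambda)=c_1(\lambda)\f_0+c_2(\lambda)\psi_0$ of \eqref{3} whose boundary values satisfy $(Q_{ij}(\lambda))\binom{c_1(\lambda)}{c_2(\lambda)}=O((\lambda-\lambda_0)^m)$; modulo high-order terms the coefficient vector must lie in $\ker(Q_{ij}(\lambda_0))$, i.e. be spanned by the two vectors generating $\vek{\omega_1},\vek{\omega_2}$, and comparing with the classical fact that the total algebraic multiplicity of $\lambda_0$ equals the order of the zero of the characteristic determinant $\chi$ closes the count. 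The main obstacle is precisely this last step: organizing the bookkeeping so that the two chains from $\vek{\omega_1}$ and $\vek{\omega_2}$ are correctly combined without double-counting, in particular separating the generic rank-one case of $(Q_{ij}(\lambda_0))$ (where $\vek{\omega_1}(\lambda_0)$ and $\vek{\omega_2}(\lambda_0)$ are proportional and the two chains overlap) from the rank-zero case (two independent eigenfunctions), and treating the subcase in which some $\vek{\omega_j}$ vanishes identically at $\lambda_0$.
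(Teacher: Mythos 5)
Your proposal is correct and follows essentially the same route as the paper: the reduction to the algebraic system with matrix $(Q_{ij}(\lambda))$, the kernel vectors $(Q_{12},-Q_{11})$ and $(Q_{22},-Q_{21})$ producing $\vek{\omega_1},\vek{\omega_2}$, the exact identities $U_1\vek{\omega_1}\equiv 0$, $U_2\vek{\omega_1}=-\chi$, $U_1\vek{\omega_2}=\chi$, $U_2\vek{\omega_2}\equiv 0$ (the paper's \eqref{31}, \eqref{31b}), and $\lambda$-differentiation of these identities to generate the root functions \eqref{32a}. You are in fact more explicit than the paper on the ``only if'' direction and you honestly flag the multiplicity-exhaustion bookkeeping, which the paper itself asserts without detailed proof.
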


 \begin{proof}

 It follows from \eqref{8} with $\alpha=0$ that for all $\lambda\in\mathbb{C}$ the function
 \begin{equation}
 \label{28}
 \begin{split}
 \vek{\omega_1}(x;\lambda)=\,&
 (P_{12}(\lambda)+P_{13}(\lambda)\psi_{01}(1;\lambda)+
               P_{14}(\lambda)\psi_{02}(1;\lambda))\f_0(x;\lambda) \\
  &-(P_{11}(\lambda)+P_{13}(\lambda)\f_{01}(1;\lambda)+
               P_{14}(\lambda)\f_{02}(1;\lambda))\psi_0(x;\lambda)
 \end{split}
 \end{equation}
 is a solution of the first equation in \eqref{5}.
 Moreover, since
 \begin{equation}
 \label{31}
 P_{21}(\lambda)\omega_{11}(0,\lambda)+P_{22}(\lambda)\omega_{12}(0,\lambda)
  +P_{23}(\lambda)\omega_{11}(1,\lambda)+P_{24}(\lambda)\omega_{12}(1,\lambda)=-\chi(\lambda),
 \end{equation}
 $\vek{\omega_1}(x;\lambda_0)$ is a solution of
 the second equation in \eqref{5},
 if $\lambda_0$ is a root of $\chi(\lambda)$.
 Similarly, for all $\lambda\in\mathbb{C}$ the function
 \begin{equation}
 \label{29}
 \begin{split}
 \vek{\omega_2}(x;\lambda)=\,&
 (P_{22}(\lambda)+P_{23}(\lambda)\psi_{01}(1;\lambda)+
               P_{24}(\lambda)\psi_{02}(1;\lambda))\f_0(x;\lambda) \\
  &-(P_{21}(\lambda)+P_{23}(\lambda)\f_{01}(1;\lambda)+
                P_{24}(\lambda)\f_{02}(1;\lambda))\psi_0(x;\lambda).
 \end{split}
 \end{equation}
 is a solution of the second equation in \eqref{5} and since
 \begin{equation}
 \label{31b}
 P_{11}(\lambda)\omega_{21}(0,\lambda)+P_{12}(\lambda)\omega_{22}(0,\lambda)
  +P_{13}(\lambda)\omega_{21}(1,\lambda)+P_{14}(\lambda)\omega_{22}(1,\lambda)=\chi(\lambda),
 \end{equation}
 $\vek{\omega_j}(x;\lambda_0)$ is a solution of
 the first equation in \eqref{5} too,
 if $\lambda_0$ is a root of $\chi(\lambda)$.

 If, in addition, $\lambda_0$ is a root of $\chi(\lambda)$ of the order $p_0=p_0^{(1)}+p_0^{(2)}$,
 then the operator determined by \eqref{3}, \eqref{5} has
 precisely $p$ eigenfunctions and associate functions
 corresponding to $\lambda_0$.
 In fact, it follows from \eqref{31}, \eqref{31b}
 that all nonzero functions
 \begin{equation}
 \label{32}
 \left.\frac 1{k!}\frac{\partial^k}{\partial\lambda^k}\omega_j(x;\lambda)
 \right|_{\lambda=\lambda_0},\quad\text{ where } \min(p_0^{(1)},p_0^{(2)}) \le k<p_0,\quad j=1,2,
 \end{equation}
 are eigenfunctions and associate functions corresponding to
 the eigenvalue $\lambda_0$.
 \end{proof}

 The completeness result can now be stated as follows.

 \begin{theorem}
 \label{theo.1}
 Let $P_{ij}$ (i=1,2 ; j=1,2,3,4) be polynomials,
 let the rank of the polynomial matrix
 \begin{equation}
 P(\lambda)=
 \begin{pmatrix}
 P_{11}(\lambda) & P_{12}(\lambda) & P_{13}(\lambda) & P_{14}(\lambda) \\
 P_{21}(\lambda) & P_{22}(\lambda) & P_{23}(\lambda) & P_{24}(\lambda)
 \end{pmatrix}
 \end{equation}
 be equal to 2 for all $\lambda\in\dC$, and let
 \begin{equation}
 \label{23}
 \deg J_{14}=\deg J_{32}\ge \max\{\deg J_{13},\deg J_{42},M\},
 \end{equation}
 where $M=\max\{\,\deg P_{ij}:\ i\in\{1,2\};\ j\in\{1,2,3,4\}\, \}$ and
 \begin{equation} \label{t3.2}
 J_{ij}=\det\begin{pmatrix}
 P_{1i} & P_{1j} \\
 P_{2i} & P_{2j}
 \end{pmatrix} ,\quad i,j=\{1,2,3,4\}.
 \end{equation}
 Then the SEAF of the problem \eqref{3}, \eqref{5} is complete
 in $L^2[0,1]\oplus L^2[0,1]$.

 Moreover, let the set $\Phi$, which consists of $N:=\deg J_{14}-M$
 eigenfunctions and associate functions, satisfy the following condition:

 If $\Phi$ contains either an eigenfunction or an associate function
 corresponding to an eigenvalue $\lam_k$, then it also contains all
 the associate functions of higher order corresponding to
 the same eigenvalue.

 Then the SEAF of the problem \eqref{3}, \eqref{5} without the set $\Phi$
 is also complete in the space $L^2[0,1]\oplus L^2[0,1].$
 \end{theorem}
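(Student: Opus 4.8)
The plan is to run Keldysh's scheme: reduce completeness to a non-existence statement for orthogonal vectors, use Proposition~\ref{SEAF} to remove the poles of a scalar analytic function by orthogonality, and then annihilate that function by a Phragm\'en--Lindel\"of/Liouville argument whose feasibility is secured by the degree condition \eqref{23}. Completeness of the SEAF in $H:=L^2[0,1]\oplus L^2[0,1]$ is equivalent to the implication: if $g\in H$ satisfies $\langle v,g\rangle=0$ for every root function $v$, then $g=0$. For $\lambda$ with $\chi(\lambda)\neq0$ the inhomogeneous problem \eqref{3}, \eqref{5} (with a right-hand side $f$) is uniquely solvable; writing $R_\lambda f$ for its solution and applying Cramer's rule to the two boundary conditions, $R_\lambda f$ is meromorphic in $\lambda$ with poles exactly at the zeros of $\chi$, and by Proposition~\ref{SEAF} the principal part at an eigenvalue $\lambda_0$ is finite-rank with range spanned by the functions $\tfrac1{k!}\partial_\lambda^k\omega_j(x;\lambda)|_{\lambda_0}$. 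Hence, for $g$ orthogonal to all root functions, the scalar function $h_f(\lambda):=\langle R_\lambda f,g\rangle$ has every principal part annihilated and is entire for each $f\in H$.

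The next step is the asymptotics of $\chi$. Expanding the determinant \eqref{27} by the Cauchy--Binet formula in terms of the minors $J_{ij}$ of the boundary matrix $P$ and the boundary values of $\varphi_0,\psi_0$ gives
\[
\chi=J_{12}+J_{13}\psi_{01}(1;\lambda)+J_{14}\psi_{02}(1;\lambda)-J_{23}\varphi_{01}(1;\lambda)-J_{24}\varphi_{02}(1;\lambda)+J_{34}W(1;\lambda).
\]
Inserting Lemma~\ref{lem.2} and Lemma~\ref{lem.Wr}, the dominant term is $J_{32}(\lambda)\exp(ia\lambda)$ in $\dC^+$ (from $-J_{23}\varphi_{01}(1)$, recall $J_{32}=-J_{23}$) and $J_{14}(\lambda)\exp(ib\lambda)$ in $\dC^-$. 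Hypothesis \eqref{23}, i.e. $\deg J_{14}=\deg J_{32}\ge\max\{\deg J_{13},\deg J_{42},M\}$, is exactly what makes these the genuine leading parts: since $\psi_{01}(1),\varphi_{02}(1)=O(\exp(ia\lambda)/\lambda)$ in $\dC^+$, the terms $J_{13}\psi_{01}(1)$, $J_{24}\varphi_{02}(1)$, the polynomial $J_{12}$, and the subexponential $J_{34}W(1)$ all fall to lower order. Thus $\chi$ is entire of exponential type with indicator diagram the segment $[ia,ib]$ and leading polynomial degree $\deg J_{14}$, and its zeros form two asymptotically arithmetic sequences of density $(b-a)/\pi$.

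For the first assertion, the Cramer numerator of $h_f$ is built from the same $\varphi_0,\psi_0$ and from the $P_{ij}$ of degree $\le M$, so it carries the same leading exponentials as $\chi$ in each half-plane; condition \eqref{23} makes its polynomial degree fall short of that of $\chi$ by $N=\deg J_{14}-M\ge0$, so the exponentials cancel in $h_f=(\text{numerator})/\chi$ and one obtains $h_f=O(\lambda^{-N})$ as $|\lambda|\to\infty$ in each closed half-plane, while $h_f\to0$ along $\dR$ by the Riemann--Lebesgue lemma. Being entire, of zero exponential type, and vanishing at infinity, $h_f$ is bounded by Phragm\'en--Lindel\"of in $\overline{\dC^+}$ and $\overline{\dC^-}$, constant by Liouville, hence $h_f\equiv0$. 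Since $f$ is arbitrary this forces $R_\lambda^{*}g=0$, and injectivity of $R_\lambda^{*}$ gives $g=0$, proving completeness. I expect the \emph{main obstacle} to be precisely this growth estimate: converting the degree inequality \eqref{23} into the decay $O(\lambda^{-N})$ requires a careful indicator and polynomial-degree count for the numerator, together with the multiplicity bookkeeping (the splitting $p_0=p_0^{(1)}+p_0^{(2)}$ of Proposition~\ref{SEAF}) that guarantees orthogonality kills the principal part at each $\lambda_0$ to \emph{full} order.

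For the removal of $\Phi$, suppose $g\perp(\text{SEAF}\setminus\Phi)$. Then $h_f$ is meromorphic with poles only at the eigenvalues met by $\Phi$; the ancestor-closedness of $\Phi$ (it contains, with any associate function, all higher ones of the same $\lambda_k$) ensures that the surviving principal parts are honest poles whose orders sum to at most $N=\deg J_{14}-M$. Multiplying by the monic polynomial $\Pi$ of degree $\le N$ clearing these poles, the surplus decay $h_f=O(\lambda^{-N})$ from the previous paragraph exactly offsets the growth of $\Pi$, so $\Pi h_f$ is entire of zero exponential type and bounded (indeed $o(1)$) on $\dR$; hence $\Pi h_f$ is constant and then $\equiv0$. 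Therefore $h_f\equiv0$ once more, $g=0$, and the SEAF stays complete after deleting $\Phi$. The delicate point is again quantitative: matching the number $N$ of deletable functions with the polynomial gap $\deg J_{14}-M$, which is exactly where the sharp form of \eqref{23} enters.
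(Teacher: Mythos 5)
Your global strategy (orthogonal vector, analytic test function, annihilation via Phragm\'en--Lindel\"of powered by \eqref{23}) is the same Keldysh-type scheme as the paper's, but your implementation through the resolvent $R_\lambda$ leaves two genuine gaps. First, everything hinges on the claim that the principal part of $R_\lambda$ at an eigenvalue is finite rank with range spanned by the root functions, and that when $g$ is orthogonal only to the SEAF minus $\Phi$ the surviving pole orders at each eigenvalue are bounded by the number of $\Phi$-functions there. Proposition~\ref{SEAF} does not give this: it identifies eigenvalues as zeros of $\chi$ and root functions as derivatives of $\vek\omega_j$, but says nothing about the Laurent coefficients of $R_\lambda$. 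Since the boundary conditions \eqref{5} depend polynomially on $\lambda$, the problem is an operator \emph{pencil}, not an operator, and the structural statement you need (Laurent coefficients of the form $\sum_k\langle\cdot,z_k\rangle w_k$ with $w_k$ root functions of the problem, $z_k$ root functions of the adjoint pencil, and the chain structure that lets the upward-closedness of $\Phi$ do the bookkeeping) requires its own proof. The paper circumvents exactly this by testing with $\widetilde F_j(\lambda)=\langle\vek\omega_j(\cdot;\lambda),\vec f\rangle$, which is entire \emph{by construction}; orthogonality then enters only through the order of its zeros at eigenvalues, read off directly from Proposition~\ref{SEAF}, and $F_j=\Pi\widetilde F_j/\chi$ is entire with no resolvent analysis at all.

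Second, your Phragm\'en--Lindel\"of geometry is wrong. By \eqref{42}, \eqref{43} the zeros of $\chi$ (the poles of $R_\lambda$) all lie in a horizontal strip $|\Im\lambda|\le c$, and since $\chi$ has exponential type with indicator segment $[ia,ib]$ there are infinitely many of them marching off to infinity along that strip. Consequently (i) $\langle R_\lambda f,g\rangle$ does \emph{not} tend to $0$ along $\dR$ --- Riemann--Lebesgue cannot be applied to a resolvent whose poles approach the real axis --- and (ii) the bound $h_f=O(\lambda^{-N})$ cannot hold ``in each closed half-plane'', because your numerator-over-$\chi$ estimate degenerates at the zeros of $\chi$; it is only available for $|\Im\lambda|>c$. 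So Phragm\'en--Lindel\"of on $\overline{\dC^{\pm}}$ with boundary $\dR$, followed by Liouville, collapses as written. The correct geometry is the paper's: prove $F_j(\lambda)=O(1/\sqrt{|\Im\lambda|})$ for $|\Im\lambda|>c$ and apply the Phragm\'en--Lindel\"of theorem \emph{for the strip} $|\Im\lambda|\le c$ (together with the fact, via Lindel\"of's theorem on quotients, that the entire function $F_j$ is still of exponential type). Two smaller points: Theorem~\ref{theo.1} does not assume $Q$ differentiable, so the asymptotics must come from Lemma~\ref{lem.1} (factors $O(1/\Im\lambda)$), not Lemma~\ref{lem.2}; and your last step ($h_f\equiv0$ for all $f$ implies $g=0$) silently uses density of the range of $R_\lambda$, whereas the paper concludes from $\widetilde F_j\equiv 0$ that $\vec f$ is orthogonal to all solutions of \eqref{3} for non-eigenvalue $\lambda$, hence $\vec f\equiv 0$.
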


 \begin{proof}
 Suppose that the SEAF of the problem \eqref{3}, \eqref{5} without
 the set $\Phi$ is not complete in the space $L^2[0,1]\oplus L^2[0,1]$.
 Then there exists a nonzero vector function
 $\vec f(x)=(f_1(x),f_2(x))^\top$,
 which is orthogonal to the SEAF of the problem \eqref{3}, \eqref{5}
 (possibly, excluding functions from the set $\Phi$).
 Define
 \begin{equation} \label {35}
 \widetilde{F_j}(\lam):=\left\langle\vek \omega_j(x;\lam),\vek f(x)\right\rangle
 =\int_0^1(\omega_{j1}(x;\lam)\overline{f_1(x)}
  +\omega_{j2}(x;\lam)\overline{f_2(x)})\,dx.
 \end{equation}
 Clearly, $\widetilde{F_j}(\lam)$ is an entire function.
 If $\lambda_s$ is an eigenvalue of multiplicity $p_s=p_s^{(1)}+p_s^{(2)}$ and the set $\Phi$
 contains neither an eigenfunction nor an associate function corresponding
 to $\lambda_s$, then it follows from Proposition \ref{SEAF} that $\lambda_s$ is
 a root of $\widetilde{F_j}(\lam)$ of order $p_s^{(j)}$, $j=1,2$.

 If $\Phi$ contains $k$ eigenfunctions or associate functions
 corresponding to the eigenvalue $\lambda_s$, then $\lambda_s$ is a root
 of $\widetilde{F_j}(\lam)$ of order greater
 than or equal to $p_s^{(j)}-k$, $j=1,2$.

 Let $\Phi\ne\emptyset$ and denote by $\Lambda$ the set of all eigenvalues of the problem
 \eqref{3}, \eqref{5}, such that the corresponding eigenfunctions
 (or associate functions) belong to the set $\Phi$. For each
 $\lambda_s\in\Lambda$ denote by $q_s$ the number of eigenfunctions
 and associate functions in $\Phi$ corresponding to $\lambda_s$.
 Define
 \begin{equation}
 \label{36}
 \Pi(\lam)=\prod_{\lam_s\in\Lambda}(\lam-\lam_s)^{q_s}.
 \end{equation}

 Let $\lambda_k$ be an eigenvalue of the problem \eqref{3}, \eqref{5} of
 multiplicity $p_k$. Then $\lambda_k$ is a zero of the product
 $\Pi(\lam) \widetilde F_j(\lam)$ at least of order $p_k$.
 Consequently, the functions
 \begin{equation}
 \label{37}
 F_j(\lam)=\frac{\Pi(\lam)\widetilde{F_j}(\lam)}{\chi(\lam)}
 \end{equation}
 are entire. Next an estimate for these functions will be derived.

 One can rewrite $\chi(\lambda)$ as follows
 \begin{equation} \label {38}
 \begin{split}
 \chi(\lambda)
  &=J_{12}+J_{13}\psi_{01}(1;\lambda)+J_{14}\psi_{02}(1;\lambda)
    +J_{32}\f_{01}(1;\lambda)\\
 & \qquad  +J_{42}\f_{02}(1;\lambda)+J_{34} \det
 \begin{pmatrix}
 \f_{01}(1;\lambda) & \psi_{01}(1;\lambda) \\
 \f_{02}(1;\lambda) & \psi_{02}(1;\lambda)
 \end{pmatrix} \\
 &=J_{12}+J_{13}\psi_{01}(1;\lambda)+J_{14}\psi_{02}(1;\lambda)+
   J_{32}\f_{01}(1;\lambda) \\
 &\qquad +J_{42}\f_{02}(1;\lambda)+J_{34}
  \left(1+O\left(\tfrac 1{\Im\lambda}\right)\right)\exp((a+b)\lambda i).
 \end{split}
 \end{equation}
 Then one obtains from \eqref{11}, \eqref{12}, and the assumption
 \eqref{23} the following estimates for $\chi(\lambda)$:
 \begin{equation} \label {42}
 \chi(\lambda)=(1+O(\tfrac 1{\Im\lambda}))J_{32}\exp(a\lambda i),
 \quad \lambda\in\dC^+;
 \end{equation}
 \begin{equation} \label {43}
 \chi(\lambda)=(1+O(\tfrac 1{\Im\lambda}))J_{14}\exp(b\lambda i),
 \quad \lambda\in\dC^-.
 \end{equation}
 Moreover, the definition of $\vek \omega_j(x;\lambda)$
 (cf. \eqref{28} and \eqref{29}) implies that
 \begin{equation}
 \label{47}
 \begin{split}
 \vek \omega_j(x;\lambda)
 &=-P_{j1}(\lambda)\psi_0(x;\lambda)+P_{j2}(\lambda)\f_0(x;\lambda) \\
 &\qquad +(1+O\left(\tfrac 1{\Im\lambda}\right))\exp((a+b)\lambda i)
          (-P_{j3}(\lambda)\psi_{1}(x;\lambda)+P_{j4}(\lambda)\f_{1}(x;\lambda)).
 \end{split}
 \end{equation}
If $\lambda\in\dC^+$, then \eqref{47} and the estimate \eqref{11}
imply
 \begin{equation} \label {48}
 \begin{split}
 \omega_{jk}(x;\lambda)
 &=(O(P_{j1}(\lambda))+O(P_{j2}(\lambda)))\exp(a\lambda ix) \\
 &\qquad +(O(P_{j3}(\lambda))+O(P_{j4}(\lambda)))\exp(a\lambda i)\exp(b\lambda ix).
 \end{split}
 \end{equation}
 By using the Cauchy--Schwartz inequality one gets
 \[
 \int_0^1|f_k(x)\exp(a\lambda ix)|\,dx
   =O\left(\frac{\exp(a\lambda i)}{\sqrt{|\Im\lambda|}}\right), \quad
 \int_0^1|f_k(x)\exp(b\lambda ix)|\,dx=O\left(\frac{1}{\sqrt{|\Im\lambda|}}\right),
 \]
 and consequently there exists a constant $c_1>0$, such that, for $\Im\lambda>c_1$ and $a<0<b$,
 \begin{equation} \label{50}
 \widetilde{F}_j(\lambda)
  =O\left(\max|P_{jk}(\lambda)|\frac{\exp(a\lambda i)}{\sqrt{|\Im\lambda|}}\right)
  =O\left(\frac{\lambda^{M}}{\sqrt{|\Im\lambda|}}\right)e^{a\lambda i}.
 \end{equation}
 Similarly, there exists a constant $c_2<0$, such that, for $\Im\lambda<c_2$,
 \begin{equation} \label{51}
 \widetilde{F}_j(\lambda)=O\left(\frac{\lambda^{M}}{\sqrt{|\Im\lambda|}}\right)e^{b\lambda i}.
 \end{equation}
 From \eqref{42}, \eqref{43}, \eqref{50}, \eqref{51}, and the assumption \eqref{23}
 one obtains finally the estimate
 \begin{equation} \label{52}
 F_j(\lambda)=O\left(\frac1{\sqrt{|\Im\lambda|}}\right),
 \quad |\Im\lambda|>c.
 \end{equation}
 By applying Phragmen--Lindel\"{o}f theorem for a strip one concludes that $F_j(\lambda)\equiv 0$.
 Consequently, $\widetilde{F}_j(\lambda)\equiv 0$,
 i.e. $\vec f(x)$ is orthogonal to
 $\vek\omega_1(x;\lambda)$ and $\vek\omega_2(x;\lambda)$ for all $\lambda$.
 However, the functions $\vek\omega_1(x;\lambda)$ and $\vek\omega_2(x;\lambda)$
 for all $\lambda$ form a fundamental system of solutions
 of the equation \eqref{3} if $\lambda$ is not an eigenvalue.
 Since the set of eigenvalues coincides with the set of
 all roots of $\chi(\lambda)$, this set is discrete.
 This implies that $\vek{f}(x)$ is orthogonal to all solutions of
 the equation \eqref{3}, so that $\vek{f}(x)\equiv 0$.

 Therefore, there is no nontrivial function $\vek{f}(x)$
 orthogonal to the SEAF of the problem \eqref{3}--\eqref{5}
 (maybe without the set $\Phi$).
 \end{proof}



 \begin{theorem} \label {theo.2}
 Let $P_{ij}$ (i=1,2 ; j=0,1,\dots,9) be polynomials,
 let the rank of the matrix
 \begin{equation}
 \begin{pmatrix}
 P_{10}(\lambda) & P_{11}(\lambda) & \dots & P_{19}(\lambda) \\
 P_{20}(\lambda) & P_{21}(\lambda) & \dots & P_{29}(\lambda)
 \end{pmatrix}
 \end{equation}
 be equal to 2 for all $\lambda\in\dC$, and let
 \begin{equation} \label {54}
 \deg J_{03}=\deg J_{12}=M,
 \end{equation}
 where
 \begin{equation}
 J_{ij}=\det\begin{pmatrix}
 P_{1i} & P_{1j} \\
 P_{2i} & P_{2j}
 \end{pmatrix} ,\quad i,j=0,1,\dots,9,
 \end{equation}
 and $M=\max\{\,\deg P_{ij}:\, i\in\{1,2\};\, j\in\{0,1,\dots,9\}\,\}$.
 Then the SEAF of problem \eqref{3}, \eqref{6} is complete in
 $L^2[0,1]\oplus L^2[0,1]$.

 Moreover, let the set $\Phi$, which consists of $M$
 eigenfunctions and associate functions, satisfy
 the following condition:

 If $\Phi$ contains either an eigenfunction or an associate function
 corresponding to an eigenvalue $\lam_k$, then it also contains
 all the associate functions of higher order corresponding to
 the same eigenvalue.

 Then the SEAF of the problem \eqref{3}, \eqref{6} without the set $\Phi$
 is also complete in the space $L^2[0,1]\oplus L^2[0,1]$.
 \end{theorem}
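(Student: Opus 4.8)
The plan is to imitate the proof of Theorem~\ref{theo.1}, the only genuinely new ingredient being that the linear characteristic determinant is replaced by a resultant. Writing every solution of \eqref{3} as $\vek y=c_1\vek\f_0(x;\lam)+c_2\vek\psi_0(x;\lam)$, the initial data \eqref{8} give $y_1(0)=c_1$, $y_2(0)=c_2$ and $y_j(\tfrac12)=c_1\f_{0j}(\tfrac12;\lam)+c_2\psi_{0j}(\tfrac12;\lam)$. Substituting this into \eqref{6} turns each boundary condition into a binary quadratic form
\[
q_i(c_1,c_2)=A_i(\lam)c_1^2+B_i(\lam)c_1c_2+C_i(\lam)c_2^2,\qquad i=1,2,
\]
whose coefficients are entire in $\lam$ and are explicit combinations of the $P_{ij}$ with the numbers $\f_{0j}(\tfrac12;\lam),\psi_{0j}(\tfrac12;\lam)$. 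A nonzero $(c_1,c_2)$ satisfying both conditions exists iff the two forms have a common zero, i.e. iff the resultant
\[
\chi(\lam):=[AC]^2-[AB][BC],\qquad [XY]:=X_1Y_2-X_2Y_1,
\]
vanishes; this $\chi$ is the characteristic function. I would then establish the exact analogue of Proposition~\ref{SEAF}: the functions $\vek\omega_1=[BC]\vek\f_0-[AC]\vek\psi_0$ and $\vek\omega_2=[AC]\vek\f_0-[AB]\vek\psi_0$ have coefficient pairs $([BC],-[AC])$ and $([AC],-[AB])$ for which $q_i=C_i\chi$ and $q_i=A_i\chi$ respectively, so each is an eigenfunction exactly when $\chi(\lam)=0$; since the determinant of their coefficient pairs equals $\chi$, they are linearly independent for every non-eigenvalue $\lam$, and the associate functions are the $\lam$-derivatives \eqref{32a}, the order of the zero of $\chi$ at $\lam_0$ being the algebraic multiplicity.

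The heart of the proof, and the step I expect to be hardest, is the asymptotics of $\chi$. Naively inserting the component bounds \eqref{11}, \eqref{12} into the degree-four expression $[AC]^2-[AB][BC]$ produces spurious contributions of exponential order $e^{3a\lam i}$ or $e^{4b\lam i}$; these apparent top-order terms must be shown to cancel. The correct bookkeeping rests on the fact that $\vek\f_0,\vek\psi_0$ are to leading order the dominant and recessive exponential modes, together with the Wronskian relation of Lemma~\ref{lem.Wr}, which controls $\det\big(\f_{0j}(\tfrac12),\psi_{0j}(\tfrac12)\big)$ and forces the would-be leading cross terms to be exponentially small. Carrying this out I expect
\[
\chi(\lam)=(1+o(1))R_+(\lam)e^{2a\lam i}\ (\lam\in\dC^+),\qquad
\chi(\lam)=(1+o(1))R_-(\lam)e^{2b\lam i}\ (\lam\in\dC^-),
\]
where $R_+$ and $R_-$ have the same leading parts as $J_{12}^2$ and $J_{03}^2$; hypothesis \eqref{54} then makes $R_\pm$ nonzero polynomials of degree $2M$, and in particular all eigenvalues lie in a horizontal strip.

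Granting these estimates the completeness argument runs parallel to Theorem~\ref{theo.1}. Assuming the SEAF without $\Phi$ were incomplete, take $0\ne\vek f\perp$ SEAF and set $\wt F_j(\lam)=\langle\vek\omega_j(\cdot;\lam),\vek f\rangle$; these are entire and, by the analogue of Proposition~\ref{SEAF}, vanish at each eigenvalue to the prescribed order, so with $\Pi(\lam)=\prod_{\lam_s\in\Lambda}(\lam-\lam_s)^{q_s}$ of degree $|\Phi|=M$ the quotients $F_j=\Pi\wt F_j/\chi$ are entire. A Cauchy--Schwarz estimate identical to the one preceding \eqref{50}, now applied to $\vek\omega_j$ and using that the recessive mode contributes only $O(|\Im\lam|^{-1/2})$, gives in $\dC^+$ the bounds $\wt F_2=O(\lam^{M}|\Im\lam|^{-1/2})e^{2a\lam i}$ and $\wt F_1=O(\lam^{2M}|\Im\lam|^{-1/2})e^{3a\lam i/2}$, and the symmetric statements ($1\leftrightarrow2,\ a\leftrightarrow b$) in $\dC^-$. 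Thus $F_2$ is bounded in $\dC^+$ and $F_1$ in $\dC^-$, while in the opposite half-plane each $F_j$ decays exponentially and so is of at most polynomial growth. Hence each $F_j$ is entire, of polynomial growth on $\dC$ and bounded on a half-plane; by a Phragmen--Lindelof/Liouville argument it is constant, and the decay forces $F_j\equiv0$. Therefore $\vek f\perp\vek\omega_1(\cdot;\lam),\vek\omega_2(\cdot;\lam)$ for every $\lam$, and since these span the solution space of \eqref{3} whenever $\chi(\lam)\ne0$, $\vek f$ is orthogonal to all solutions, so $\vek f\equiv0$. The removable set $\Phi$ thus has precisely $\deg R_\pm-\deg\wt F_j=2M-M=M$ elements, and its deletion leaves the estimates on $F_j$ unchanged, which yields the asserted completeness with and without $\Phi$.
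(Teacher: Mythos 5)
Your proposal reproduces the paper's own strategy up to a point: the reduction to two binary quadratic forms, the resultant $\chi=D_{13}^2-D_{12}D_{23}$ of \eqref{57} as characteristic function, the pair $\vek\omega_1,\vek\omega_2$ with the identities $\Gamma_j=Q_{j1}\chi$ and linear independence off the zero set of $\chi$, the entire quotients $F_j=\Pi\wt{F}_j/\chi$, and — correctly — the recognition that the asymptotics \eqref{60}, \eqref{61} of $\chi$ require a cancellation governed by the Wronskian, with leading coefficients $J_{12}^2$ and $J_{03}^2$. The gap is the step you dismiss as ``a Cauchy--Schwarz estimate identical to the one preceding \eqref{50}''. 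It is not, and it is exactly where the paper has to do its hardest work. In Theorem \ref{theo.1} the coefficients multiplying $\vek\f_0,\vek\psi_0,\vek\f_1,\vek\psi_1$ inside $\vek\omega_j$ are the polynomials $P_{jk}$ themselves, so $|P_{jk}|=O(|\lam|^M)$ and termwise estimation suffices. Here the coefficients are $D_{12},D_{13},D_{23}$, quartic expressions in $\f_{0k}(\tfrac12;\lam),\psi_{0k}(\tfrac12;\lam)$ which are themselves exponentially large, and a \emph{second} Wronskian cancellation, now inside $\vek\omega_j$, is indispensable before any integration. Concretely, in $\dC^+$ one finds $D_{12}\ni J_{27}\,\f_{01}^3(\tfrac12;\lam)$ and $D_{13}\ni J_{27}\,\f_{01}^2(\tfrac12;\lam)\psi_{01}(\tfrac12;\lam)$, each of modulus roughly $|\lam|^{\deg J_{27}}e^{-3a\Im\lam/2}$, and hypothesis \eqref{54} does not prevent $\deg J_{27}=2M$. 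Estimating $D_{13}\vek\f_0-D_{12}\vek\psi_0$ term by term and integrating gives only $\wt{F}_j=O(|\lam|^{2M}e^{-5a\Im\lam/2}(\Im\lam)^{-3/2})$, and after division by $|\chi|\sim|\lam|^{2M}e^{-2a\Im\lam}$ the quotient $F_j$ is exponentially \emph{unbounded} in $\dC^+$, so no Phragm\'en--Lindel\"of argument can start. What rescues the proof is the identity $\psi_{01}(\tfrac12;\lam)\vek\f_0(x;\lam)-\f_{01}(\tfrac12;\lam)\vek\psi_0(x;\lam)=-W(\tfrac12;\lam)\,\vek\psi_{\tfrac12}(x;\lam)$, which recombines the dangerous terms through the Wronskian once more; this is precisely the content of the paper's auxiliary function $\vek g$, the midpoint-normalized solutions with initial conditions \eqref{69}, their estimates \eqref{70}--\eqref{73}, and the resulting bounds \eqref{74}, \eqref{75}. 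Your proposal contains no substitute for this machinery; the remark that ``the recessive mode contributes only $O(|\Im\lam|^{-1/2})$'' concerns the size of $\psi_0$, not the size of the coefficients $D_{ij}$, which is the actual problem.

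Relatedly, the bounds you assert are not correct as stated. In $\dC^+$ your $\wt{F}_1$ (for $\vek\omega_1=D_{23}\vek\f_0-D_{13}\vek\psi_0$) also contains the contribution of the $J_{12}$-terms, essentially $-J_{12}\f_{01}(\tfrac12;\lam)\psi_{01}(\tfrac12;\lam)\int_0^1\f_{01}(x;\lam)\overline{f_1(x)}\,dx$, which is of order $|\lam|^{M}e^{-2a\Im\lam}(\Im\lam)^{-3/2}$ and dominates your claimed $O(|\lam|^{2M}e^{-3a\Im\lam/2}|\Im\lam|^{-1/2})$ as $\Im\lam\to+\infty$. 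The correct target is the paper's uniform bound \eqref{74}, $\vek{\omega_i}(x;\lam)=O(\lam^M\exp(ia\lam(x+1)))$ for \emph{both} $i$, after which both $F_j$ are $O(|\Im\lam|^{-1/2})$ on both half-planes and the strip Phragm\'en--Lindel\"of conclusion of Theorem \ref{theo.1} applies verbatim; your asymmetric bounded-on-one-half-plane/polynomial-growth Liouville variant is an artifact of those incorrect intermediate estimates and becomes unnecessary once the estimates are done correctly.
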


 \begin{proof}

 The proof of this theorem is similar to the proof of Theorem \ref{theo.1}.

 As in Theorem \ref{theo.1} one considers the characteristic function
 and functions $\vek\omega_1(x;\lambda)$ and $\vek\omega_2(x;\lambda)$,
 but in this case these functions will be defined by other formulas.

 Let $\vek\omega(x;\lambda)$ be an arbitrary solution of the system
 \eqref{3}. Then $\vek\omega(x;\lambda)$ may be written in the form
 \begin{equation*}
 \vek{\omega}(x;\lambda)=A\vek{\f_0}(x;\lambda)+B\vek{\psi_0}(x;\lambda).
 \end{equation*}
 It follows that $\vek\omega(x;\lambda)$ is a solution of the problem
 \eqref{3}, \eqref{6} if and only if
 \begin{equation} \label{55}
 \begin{cases}
 \begin{split}
 P_{10}(\lam)A^2+P_{11}(\lam)B^2+
      P_{12}(\lam)(A\f_{01}(\tfrac 12;\lambda)+B\psi_{01}(\tfrac 12;\lambda))^2+ \\
 P_{13}(\lam)(A\f_{02}(\tfrac 12;\lambda)+B\psi_{02}(\tfrac 12;\lambda))^2+
      P_{14}(\lam)AB+                                                  \\
 P_{15}(\lam)A(A\f_{01}(\tfrac 12;\lambda)+B\psi_{01}(\tfrac 12;\lambda))+
      P_{16}(\lam)A(A\f_{02}(\tfrac 12;\lambda)+B\psi_{02}(\tfrac 12;\lambda))+  \\
 P_{17}(\lam)B(A\f_{01}(\tfrac 12;\lambda)+B\psi_{01}(\tfrac 12;\lambda))+
      P_{18}(\lam)B(A\f_{02}(\tfrac 12;\lambda)+B\psi_{02}(\tfrac 12;\lambda))+  \\
 P_{19}(\lam)(A\f_{01}(\tfrac 12;\lambda)+B\psi_{01}(\tfrac 12;\lambda))
      (A\f_{02}(\tfrac 12;\lambda)+B\psi_{02}(\tfrac 12;\lambda))=0,             \\
 \end{split}                                                      \\   \\
 \begin{split}
 P_{20}(\lam)A^2+P_{21}(\lam)B^2+
      P_{22}(\lam)(A\f_{01}(\tfrac 12;\lambda)+B\psi_{01}(\tfrac 12;\lambda))^2+ \\
 P_{23}(\lam)(A\f_{02}(\tfrac 12;\lambda)+B\psi_{02}(\tfrac 12;\lambda))^2+
      P_{24}(\lam)AB+                                                  \\
 P_{25}(\lam)A(A\f_{01}(\tfrac 12;\lambda)+B\psi_{01}(\tfrac 12;\lambda))+
      P_{26}(\lam)A(A\f_{02}(\tfrac 12;\lambda)+B\psi_{02}(\tfrac 12;\lambda))+  \\
 P_{27}(\lam)B(A\f_{01}(\tfrac 12;\lambda)+B\psi_{01}(\tfrac 12;\lambda))+
      P_{28}(\lam)B(A\f_{02}(\tfrac 12;\lambda)+B\psi_{02}(\tfrac 12;\lambda))+  \\
 P_{29}(\lam)(A\f_{01}(\tfrac 12;\lambda)+B\psi_{01}(\tfrac 12;\lambda))
      (A\f_{02}(\tfrac 12;\lambda)+B\psi_{02}(\tfrac 12;\lambda))=0.
 \end{split}
 \end{cases}
 \end{equation}

 The system \eqref{55} may be rewritten in the form
 \begin{equation}
 \begin{cases}
 Q_{11}(\lambda)A^2+Q_{12}(\lambda)AB+Q_{13}(\lambda)B^2=0 \\
 Q_{21}(\lambda)A^2+Q_{22}(\lambda)AB+Q_{23}(\lambda)B^2=0,
 \end{cases}
 \end{equation}
 where
 \begin{equation}
 \begin{split}
 Q_{11}=
     & P_{10}(\lam)+
          P_{12}(\lam)\f_{01}^2(\tfrac 12;\lambda)+
          P_{13}(\lam)\f_{02}^2(\tfrac 12;\lambda)+ \\
     & +P_{15}(\lam)\f_{01}(\tfrac 12;\lambda)+
          P_{16}(\lam)\f_{02}(\tfrac 12;\lambda)+
          P_{19}(\lam)\f_{01}(\tfrac 12;\lambda)\f_{02}(\tfrac 12;\lambda),
  \\ \\
 Q_{12}=
     & 2P_{12}(\lam)\f_{01}(\tfrac 12;\lambda)\psi_{01}(\tfrac 12;\lambda)+
           2P_{13}(\lam)\f_{02}(\tfrac 12;\lambda)\psi_{02}(\tfrac 12;\lambda)+
           P_{14}(\lam)+ \\
     & +P_{15}(\lam)\psi_{01}(\tfrac 12;\lambda)+
           P_{16}(\lam)\psi_{02}(\tfrac 12;\lambda)+
           P_{17}(\lam)\f_{01}(\tfrac 12;\lambda)+ \\
     & +P_{18}(\lam)\f_{02}(\tfrac 12;\lambda)+
           P_{19}(\lam)(\f_{01}(\tfrac 12;\lambda)\psi_{02}(\tfrac 12;\lambda)+
           \psi_{01}(\tfrac 12;\lambda)\f_{02}(\tfrac 12;\lambda)),
  \\ \\
 Q_{13}=
     & P_{11}(\lam)+
           P_{12}(\lam)\psi_{01}^2(\tfrac 12;\lambda))+
           P_{13}(\lam)\psi_{02}^2(\tfrac 12;\lambda)+ \\
     & +P_{17}(\lam)\psi_{01}(\tfrac 12;\lambda))+
           P_{18}(\lam)\psi_{02}(\tfrac 12;\lambda)+
           P_{19}(\lam)\psi_{01}(\tfrac 12;\lambda)\psi_{02}(\tfrac 12;\lambda) ;
  \\ \\ \\
 Q_{21}=
     & P_{20}(\lam)+
          P_{22}(\lam)\f_{01}^2(\tfrac 12;\lambda)+
          P_{23}(\lam)\f_{02}^2(\tfrac 12;\lambda)+ \\
     & +P_{25}(\lam)\f_{01}(\tfrac 12;\lambda)+
          P_{26}(\lam)\f_{02}(\tfrac 12;\lambda)+
          P_{29}(\lam)\f_{01}(\tfrac 12;\lambda)\f_{02}(\tfrac 12;\lambda),
  \\ \\
 Q_{22}=
     & 2P_{22}(\lam)\f_{01}(\tfrac 12;\lambda)\psi_{01}(\tfrac 12;\lambda)+
           2P_{23}(\lam)\f_{02}(\tfrac 12;\lambda)\psi_{02}(\tfrac 12;\lambda)+
           P_{24}(\lam)+ \\
     & +P_{25}(\lam)\psi_{01}(\tfrac 12;\lambda)+
           P_{26}(\lam)\psi_{02}(\tfrac 12;\lambda)+
           P_{27}(\lam)\f_{01}(\tfrac 12;\lambda)+ \\
     & +P_{28}(\lam)\f_{02}(\tfrac 12;\lambda)+
           P_{29}(\lam)(\f_{01}(\tfrac 12;\lambda)\psi_{02}(\tfrac 12;\lambda)+
           \psi_{01}(\tfrac 12;\lambda)\f_{02}(\tfrac 12;\lambda)),
  \\ \\
 Q_{23}=
     & P_{21}(\lam)+
           P_{22}(\lam)\psi_{01}^2(\tfrac 12;\lambda))+
           P_{23}(\lam)\psi_{02}^2(\tfrac 12;\lambda)+ \\
     & +P_{27}(\lam)\psi_{01}(\tfrac 12;\lambda))+
           P_{28}(\lam)\psi_{02}(\tfrac 12;\lambda)+
           P_{29}(\lam)\psi_{01}(\tfrac 12;\lambda)\psi_{02}(\tfrac 12;\lambda)
 \end{split}
 \end{equation}

 It is well known (see, for example, \cite{VdW}), that a system of
 two quadratic equations has a nonzero solution if and only if, the
 resultant is equal to 0. Therefore, $\lambda_0$ is an eigenvalue of
 the problem \eqref{3}, \eqref{6} if and only if $\chi(\lambda)=0$, where
 \begin{equation} \label {57}
 \chi(\lambda)=\det
 \begin{pmatrix}
 Q_{11} & Q_{12} & Q_{13} & 0 \\
 0 & Q_{11} & Q_{12} & Q_{13} \\
 Q_{21} & Q_{22} & Q_{23} & 0 \\
 0 & Q_{21} & Q_{22} & Q_{23}
 \end{pmatrix}=D_{13}^2-D_{12}D_{23}
 \end{equation}
 with
 \begin{equation} \label{58}
 D_{ij}=\det
 \begin{pmatrix}
 Q_{1i} & Q_{1j} \\
 Q_{2i} & Q_{2j}
 \end{pmatrix}.
 \end{equation}

 Moreover, the multiplicity of $\lambda_0$ as a zero of the function
 $\chi(\lambda)$ is equal to the number of eigenfunctions and associate
 functions corresponding to the eigenvalue $\lambda_0$.

 Introduce the Wronski determinant
 \begin{equation*}
 W(x;\lambda)=\det
 \begin{pmatrix}
 \f_{01}(x;\lambda) & \psi_{01}(x;\lambda) \\
 \f_{02}(x;\lambda) & \psi_{02}(x;\lambda)
 \end{pmatrix}.
 \end{equation*}
 Then $\chi(\lambda)$ may be transformed to a polynomial of degree 4
 with the arguments $\f_{01}(\tfrac 12;\lambda),$ $\psi_{01}(\tfrac
 12;\lambda),\ \f_{02}(\tfrac 12;\lambda),\ \psi_{02}(\tfrac 12;\lambda)$, and
$W(\tfrac 12,\lambda)$, having all the coefficients of form
$J_{ij}J_{kl}$.
 In particular, the coefficient of $\f_{0,1}^4(\tfrac 12;\lambda)$ is
 equal to $J_{12}^2$, and the coefficient of $\psi_{0,2}^4(\tfrac
 12;\lambda)$ is equal to $J_{03}^2$.
Therefore, from the condition \eqref{54}, the estimates \eqref{11},
 \eqref{12}, and the following estimate (cf. Lemma~\ref{lem.Wr})
 \begin{equation} \label {59}
 W(x;\lambda)=\left(1+O\left(\tfrac 1\lambda\right)\right)\exp((a+b)\lambda i),
 \end{equation}
 one can derive the following estimates for the characteristic function $\chi(\lambda)$:
 \begin{equation} \label {60}
 \chi(\lambda)=(1+O(\tfrac 1{\Im\lambda}))J_{12}^2\exp(2a\lambda i)
 \quad \text{ for }\lambda\in\dC^+;
 \end{equation}
 and
 \begin{equation} \label {61}
 \chi(\lambda)=(1+O(\tfrac 1{\Im\lambda}))J_{01}^2\exp(2b\lambda i)
 \quad \text{ for }\lambda\in\dC^-.
 \end{equation}

 Now, introduce the functions
 \begin{equation} \label {62}
 \vek\omega_1(x;\lambda):=D_{13}\f_0(x;\lambda)-D_{12}\psi_0(x;\lambda) \quad \text{ and }\
 \vek\omega_2(x;\lambda):=D_{23}\f_0(x;\lambda)-D_{13}\psi_0(x;\lambda).
 \end{equation}
 The function $\vek\omega_1(x;\lambda)$ satisfies
 the boundary conditions \eqref{6} if and only if
 \begin{equation} \label{63}
 \begin{cases}
 \Gamma_1(\lambda):=Q_{11}(\lambda)D_{13}^2(\lambda)-Q_{12}(\lambda)D_{13}(\lambda)D_{12}(\lambda)+Q_{13}(\lambda)D_{12}^2(\lambda)=0 \\
 \Gamma_2(\lambda):=Q_{21}(\lambda)D_{13}^2(\lambda)-Q_{22}(\lambda)D_{13}(\lambda)D_{12}(\lambda)+Q_{23}(\lambda)D_{12}^2(\lambda)=0 \\
 \end{cases}.
 \end{equation}
 Observe that
 \begin{equation}
 \Gamma_1(\lambda)=Q_{11}(\lambda)\chi(\lambda) \quad\text{and }\
 \Gamma_2(\lambda)=Q_{21}(\lambda)\chi(\lambda).
 \end{equation}
 Hence, if $\lambda_0$ is an eigenvalue of multiplicity $p$, then
 \begin{equation} \label {64}
 \left.\frac{\partial^k}{\partial\lambda^k}\Gamma_1(x;\lambda)\right|_{\lambda=\lambda_0}=0
 \quad\text{ and }
 \left.\frac{\partial^k}{\partial\lambda^k}\Gamma_2(x;\lambda)\right|_{\lambda=\lambda_0}=0
 \quad\text{ for all } k<p.
 \end{equation}
 Therefore, in this case, all nonzero functions
 \begin{equation} \label {65}
 \left.\frac{\partial^k}{\partial\lambda^k}\vec\omega_1(x;\lambda)\right|_{\lambda=\lambda_0}=0,
 \quad \text{ with } k<p,
 \end{equation}
 are eigenfunctions and associate functions, corresponding to the eigenvalue $\lambda_0$.
 Similarly all nonzero functions given by
 \begin{equation} \label {66}
 \left.\frac{\partial^k}{\partial\lambda^k}\vec\omega_2(x;\lambda)\right|_{\lambda=\lambda_0}=0,
 \quad \text{ with } k<p,
 \end{equation}
 are eigenfunctions and associate functions, corresponding to the eigenvalue $\lambda_0,$ too.

 Suppose that the SEAF of the problem \eqref{3}, \eqref{6} without the set $\Phi$ is
 not complete in the space $L^2[0,1]\oplus L^2[0,1]$. Then there exists
 a nonzero vector-function
 \begin{equation}
 \vec f(x)=\binom {f_1(x)}{f_2(x)}
 \end{equation}
 which is orthogonal to the SEAF of problem \eqref{3}--\eqref{6}
 (possibly excluding the functions from the set $\Phi$).

 Just as in the proof of Theorem \ref{theo.1} introduce the
 functions $\tilde{F}_j(\lambda)$ and $\Pi(\lambda)$ by the formulae \eqref{35}
 and \eqref{36}. Then, as before, the functions
 \begin{equation} \label {67}
 F_j(\lam)=\frac{\Pi(\lam)\tilde{F_j}(\lam)}{\chi(\lam)}
 \end{equation}
 are entire. Let
 \begin{equation*}
 \vek{g}(x)=(c_1\psi_{01}(\tfrac 12;\lambda)+c_2\psi_{02}(\tfrac 12;\lambda))\vek{\f_{0}}(x;\lambda)-
 (c_1\f_{01}(\tfrac 12;\lambda)+c_2\f_{02}(\tfrac 12;\lambda))\vek{\psi_{0}}(x;\lambda),
 \end{equation*}
 where $c_1$ and $c_2$ are arbitrary complex coefficients.
 From \eqref{59} one gets the estimate
 \begin{equation*}
 \vek{g}(\tfrac 12)=\binom
 {(-c_1+O\left(\tfrac 1\lambda\right))\exp((a+b)\lambda i)}
 {(c_2+O\left(\tfrac 1\lambda\right))\exp((a+b)\lambda i)}
 \end{equation*}
 and this implies that the function $\vek{g}(x)$ satisfies the estimate
 \begin{equation} \label {68}
 \vek{g}(x)=
 \left(1+O\left(\tfrac 1{\Im\lambda}\right)\right)\exp(\tfrac 12(a+b)\lambda i)
 (-c_1\vek{\psi_{\tfrac 12}}(x;\lambda)+c_2\vek{\f_{\tfrac 12}}(x;\lambda)),
 \end{equation}
 where the functions $\vek{\f_{\tfrac 12}}(x;\lambda)$ and $\vek{\psi_{\tfrac 12}}(x;\lambda)$
 are solutions of the Cauchy problem for the system \eqref{3} with the initial
 conditions
 \begin{equation} \label{69}
 {\f_{\tfrac 12 1}(\tfrac 12;\lambda)}={\psi_{\tfrac 12 2}(\tfrac 12;\lambda)}=1
 \quad \text{ and } \quad
 {\f_{\tfrac 12 2}(\tfrac 12;\lambda)}={\psi_{\tfrac 12 1}(\tfrac 12;\lambda)}=0.
 \end{equation}
As in Lemma \ref{lem.1} one can derive for these functions the
following estimates:

 if $\lambda\in\dC^+$ and $x>\tfrac 12$ then
 \begin{equation} \label{70}
 \begin{split}
 \f_{\tfrac 12 1}(x;\lambda)&=(1+O(\tfrac1{\Im\lambda}))\exp(a\lambda i(x-\tfrac 12 )), \\
 \f_{\tfrac 12 2}(x;\lambda)&=O(\tfrac1{\Im\lambda})\exp(a\lambda i(x-\tfrac 12 ));
  \\ \\
 \psi_{\tfrac 12 1}(x;\lambda)&=O(\tfrac1{\Im\lambda})\exp(a\lambda i(x-\tfrac 12 )), \\
 \psi_{\tfrac 12 2}(x;\lambda)&=O(\tfrac1{\Im\lambda})\exp(a\lambda i(x-\tfrac 12 ));
 \end{split}
 \end{equation}

 if $\lambda\in\dC^+$ and $x<\tfrac 12$ then
 \begin{equation} \label{71}
 \begin{split}
 \f_{\tfrac 12 1}(x;\lambda)&=O(\tfrac1{\Im\lambda})\exp(b\lambda i(x-\tfrac 12 )), \\
 \f_{\tfrac 12 2}(x;\lambda)&=O(\tfrac1{\Im\lambda})\exp(b\lambda i(x-\tfrac 12 ));
  \\ \\
 \psi_{\tfrac 12 1}(x;\lambda)&=O(\tfrac1{\Im\lambda})\exp(b\lambda i(x-\tfrac 12 )), \\
 \psi_{\tfrac 12 2}(x;\lambda)&=(1+O(\tfrac1{\Im\lambda}))\exp(b\lambda i(x-\tfrac 12 ));
 \end{split}
 \end{equation}

 if $\lambda\in\dC^-$ and $x>\tfrac 12$ then
 \begin{equation} \label{72}
 \begin{split}
 \f_{\tfrac 12 1}(x;\lambda)&=O(\tfrac1{\Im\lambda})\exp(b\lambda i(x-\tfrac 12 )), \\
 \f_{\tfrac 12 2}(x;\lambda)&=O(\tfrac1{\Im\lambda})\exp(b\lambda i(x-\tfrac 12 ));
  \\ \\
 \psi_{\tfrac 12 1}(x;\lambda)&=O(\tfrac1{\Im\lambda})\exp(b\lambda i(x-\tfrac 12 )), \\
 \psi_{\tfrac 12 2}(x;\lambda)&=(1+O(\tfrac1{\Im\lambda}))\exp(b\lambda i(x-\tfrac 12 ));
 \end{split}
 \end{equation}

 if $\lambda\in\dC^-$ and $x<\tfrac 12$ then
 \begin{equation} \label{73}
 \begin{split}
 \f_{\tfrac 12 1}(x;\lambda)&=(1+O(\tfrac1{\Im\lambda}))\exp(b\lambda i(x-\tfrac 12 )), \\
 \f_{\tfrac 12 2}(x;\lambda)&=O(\tfrac1{\Im\lambda})\exp(b\lambda i(x-\tfrac 12 ));
  \\ \\
 \psi_{\tfrac 12 1}(x;\lambda)&=O(\tfrac1{\Im\lambda})\exp(b\lambda i(x-\tfrac 12 )), \\
 \psi_{\tfrac 12 2}(x;\lambda)&=O(\tfrac1{\Im\lambda})\exp(a\lambda i(x-\tfrac 12
 )).
 \end{split}
 \end{equation}

 Using the formula \eqref{68} and the estimates
 \eqref{11}, \eqref{12}, \eqref{70} -- \eqref{73} one gets the following estimates
 for the functions in \eqref{62}
 \begin{equation} \label {74}
 \vek{\omega_i}(x;\lam)=O(\lambda^M\exp(ia\lam (x+1))),\qquad \text{ if } \Im\lam>0
 \end{equation}
 and
 \begin{equation} \label {75}
 \vek{\omega_i}(x;\lam)=O(\lambda^M\exp(ib\lam (x+1))),\qquad \text{ if } \Im\lam<0.
 \end{equation}

 From the estimates  \eqref{60}, \eqref{61}, \eqref{74}, \eqref{75}
 one gets finally the estimates
 \begin{equation}
 F_i(\lambda)=O(\frac 1{\sqrt{|\Im\lambda|}}),\qquad\ |\Im\lambda|>C,
 \end{equation}
 where $C>0$ is a constant.
 Then, by Phragmen--Lindel\"{o}f theorem for a strip, one again concludes that $F_j(\lambda)\equiv 0$,
 and therefore $\tilde{F}_j(\lambda)\equiv 0$, i.e. $\vec f(x)$ is orthogonal to
 $\vek\omega_1(x;\lambda)$ and $\vek\omega_2(x;\lambda)$ for all $\lambda$.

 Observe, that if $\chi(\lambda)\ne 0$ then the functions $\vek\omega_1(x;\lambda)$ and $\vek\omega_2(x;\lambda)$
 are linearly independent. Therefore, for these values of $\lambda$, $\vek\omega_1(x;\lambda)$ and
 $\vek\omega_2(x;\lambda)$ form a fundamental system of solutions of the system \eqref{3}.
 Thus, $\vec f(x)$ is orthogonal to all solutions of the system \eqref{3}.
 Consequently, $\vec f(x)\equiv 0$ and this completes the proof.

 \end{proof}


 \section{Riesz basis property of the SEAF}

 In this section some sufficient conditions for the Riesz basis
 property of the SEAF of the system \eqref{3} with separated
 $\lambda$-depending boundary conditions will be established.

First recall the definition of the Riesz basis.

 \begin{definition}

 A system of vectors $\{ \psi_n \}_{n=1}^\infty$ is called a Riesz basis in the Hilbert space $H$
 if there exists a bounded operator $A$ with bounded inverse $A^{-1},$ such that the transformed
  system $\{ A\psi_n \}_{n=1}^\infty$ forms an orthonormal basis in $H$.

 \end{definition}

 The following lemma is well known (see \cite{GohKr}).

 \begin{lemma}\label{lem2.1}

 Let the system of the vectors $\{ \psi_n \}_{n=1}^\infty$ be complete in a Hilbert space $H$.
 Let $\{ \varphi_n \}_{n=1}^\infty$ be a Riesz basis of $H$
 such that $\sum_{n=1}^\infty \|\psi_n-\phi_n\|^2<\infty$.
 Then the system $\{ \psi_n \}_{n=1}^\infty$ is a Riesz basis of $H$, too.

 \end{lemma}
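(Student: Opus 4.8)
The plan is to reduce the statement to the case where the comparison system is an orthonormal basis, and then to produce a bounded invertible operator carrying that orthonormal basis onto $\{\psi_n\}$. Since $\{\varphi_n\}$ is a Riesz basis, by the definition above there is a bounded operator $A$ with bounded inverse such that $\{e_n\}:=\{A\varphi_n\}$ is an orthonormal basis of $H$. First I would check that applying $A$ preserves all the hypotheses: the system $\{A\psi_n\}$ is again complete, because $A$ is a topological isomorphism and hence maps dense sets onto dense sets, and moreover
\[
\sum_{n=1}^\infty\|A\psi_n-e_n\|^2=\sum_{n=1}^\infty\|A(\psi_n-\varphi_n)\|^2\le\|A\|^2\sum_{n=1}^\infty\|\psi_n-\varphi_n\|^2<\infty .
\]
Since $\{\psi_n\}$ is a Riesz basis if and only if $\{A\psi_n\}$ is, I may assume from the outset that $\{\varphi_n\}=\{e_n\}$ is an orthonormal basis.

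Next I would introduce the linear operator $K$ determined on the basis by $Ke_n:=\psi_n-e_n$. Because $\{e_n\}$ is an orthonormal basis and $\sum_n\|Ke_n\|^2<\infty$, the operator $K$ is Hilbert--Schmidt, and in particular bounded and compact. Setting $T:=I+K$ one then has $Te_n=\psi_n$ for every $n$, so $T$ is a bounded operator that sends the orthonormal basis $\{e_n\}$ onto the system $\{\psi_n\}$.

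The heart of the argument is to show that $T$ is invertible. Since $K$ is compact, $T=I+K$ is a Fredholm operator of index zero; in particular its range is closed and $\dim\ker T=\dim\operatorname{coker}T$. The range of $T$ contains every $\psi_n=Te_n$, hence contains their linear span, whose closure is all of $H$ by the assumed completeness of $\{\psi_n\}$. As the range is closed it must therefore equal $H$, so $T$ is surjective. The index being zero then forces $\ker T=\{0\}$, so $T$ is a bijection, and by the bounded inverse theorem $T^{-1}$ is bounded.

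Finally, $T^{-1}$ is a bounded operator with bounded inverse $T$ for which $\{T^{-1}\psi_n\}=\{e_n\}$ is an orthonormal basis; by the definition above this is exactly the assertion that $\{\psi_n\}$ is a Riesz basis. I expect the invertibility step to be the main obstacle: neither hypothesis suffices on its own, since the square-summability assumption only delivers the compactness of $K$ and hence the index-zero Fredholm structure, while the completeness of $\{\psi_n\}$ is what upgrades the closed range to surjectivity. It is precisely the interplay of the two that rules out a nontrivial kernel and yields the bounded two-sided inverse.
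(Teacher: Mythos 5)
Your proof is correct. Note that the paper itself offers no proof of this lemma at all: it is stated as well known with a reference to \cite{GohKr}, and your argument --- reducing to an orthonormal basis, writing the comparison operator as $T=I+K$ with $K$ Hilbert--Schmidt (hence compact), and combining the index-zero Fredholm property with the density of the span of $\{\psi_n\}$ to get invertibility --- is precisely the classical Bari-type argument found in that reference, so you have in effect supplied the standard proof the paper delegates to the literature.
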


 Also, the following lemma will be needed, which concerns the spectrum of the system \eqref{3}
 with separated $\lambda$-depending boundary conditions.

 \begin{lemma} \label {lem2.2}

 Let the function Q(x) be differentiable.

 Let $P_{11}(\lam)$ and $P_{12}(\lam)$ be relatively prime polynomials
 with $\deg P_{11}=\deg P_{12}=N_0$ and let $P_{21}(\lam)$ and
 $P_{22}(\lam)$  be relatively prime polynomials with $\deg P_{21}=\deg P_{22}=N_1$.

 Let $C_{ij}$ be the leading coefficient of the polynomial $P_{ij}(\lam)$
 and denote $C_{1}=C_{11}C_{21}$ and $C_{2}=C_{12}C_{22}$.

 Let the set $\Lambda$ contain $N=N_0+N_1$ eigenvalues of the
 problem \eqref{3} with separated $\lambda$-depending boundary
 conditions given by
 \begin{equation}  \label {b1}
 \begin{cases}
 P_{11}(\lam)y_1(0)+P_{12}(\lam)y_2(0)=0 \\
 P_{21}(\lam)y_1(1)+P_{22}(\lam)y_2(1)=0
 \end{cases}.
 \end{equation}

 Then it is possible to enumerate the remaining eigenvalues, such that
 \begin{equation} \label{l22.0}
 \lambda_n=\frac{i\ln (C_1/C_2)+2\pi n}{b-a}+O\left(\frac 1{|n|}\right),
 \quad \text{ where } n\in\dZ.
 \end{equation}
 \end{lemma}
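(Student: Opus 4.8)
The plan is to derive the characteristic function for the separated boundary conditions \eqref{b1} explicitly and then extract the asymptotic location of its zeros. Since $Q(x)$ is differentiable, I may invoke the sharper estimates of Lemma \ref{lem.2}. A general solution of \eqref{3} can be written as $\vek y(x;\lambda)=A\vek\f_0(x;\lambda)+B\vek\psi_0(x;\lambda)$. The first condition in \eqref{b1} at $x=0$, together with the initial data \eqref{8} ($\f_{01}(0)=\psi_{02}(0)=1$, $\f_{02}(0)=\psi_{01}(0)=0$), forces $P_{11}(\lambda)A+P_{12}(\lambda)B=0$, so up to scaling $A=P_{12}(\lambda)$, $B=-P_{11}(\lambda)$. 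Substituting into the second condition at $x=1$ yields the characteristic function
\begin{equation*}
\chi(\lambda)=P_{21}(\lambda)\bigl(P_{12}\f_{01}(1;\lambda)-P_{11}\psi_{01}(1;\lambda)\bigr)
 +P_{22}(\lambda)\bigl(P_{12}\f_{02}(1;\lambda)-P_{11}\psi_{02}(1;\lambda)\bigr),
\end{equation*}
whose zeros are exactly the eigenvalues. The relative primeness hypotheses guarantee that $\chi$ does not vanish identically and keeps the leading behaviour clean.

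**Extracting the dominant terms.** Next I would insert the Lemma \ref{lem.2} estimates into $\chi$, treating the half-planes $\dC^+$ and $\dC^-$ separately. In $\dC^+$ the surviving leading contributions come from $\f_{01}(1;\lambda)\sim e^{ia\lambda}$ and $\psi_{02}(1;\lambda)\sim e^{ib\lambda}$, the cross terms $\f_{02},\psi_{01}$ being $O(1/\lambda)$ smaller; the same happens symmetrically in $\dC^-$. Writing $P_{ij}(\lambda)=C_{ij}\lambda^{N}(1+O(1/\lambda))$ with the appropriate $N\in\{N_0,N_1\}$, the equation $\chi(\lambda)=0$ reduces, after dividing by the common polynomial factor $\lambda^{N_0+N_1}$, to
\begin{equation*}
C_{11}C_{21}\,e^{ia\lambda}-C_{12}C_{22}\,e^{ib\lambda}+O\!\left(\tfrac1\lambda\right)\bigl(e^{ia\lambda}+e^{ib\lambda}\bigr)=0,
\end{equation*}
that is $C_1 e^{ia\lambda}=C_2 e^{ib\lambda}(1+O(1/\lambda))$, or equivalently $e^{i(b-a)\lambda}=\tfrac{C_1}{C_2}(1+O(1/\lambda))$.

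**Solving the transcendental equation and counting.** Taking logarithms of $e^{i(b-a)\lambda}=C_1/C_2$ gives the unperturbed roots $\lambda_n^{(0)}=\frac{i\ln(C_1/C_2)+2\pi n}{b-a}$, which lie on a horizontal line and are equally spaced with gap $2\pi/(b-a)$. I would then make this rigorous by a Rouché argument: around each $\lambda_n^{(0)}$ take a small disc (or apply the argument principle on a large rectangle of height fixed and width spanning many periods) and show the $O(1/\lambda)$ perturbation shifts each zero by $O(1/|n|)$, producing exactly one eigenvalue near each $\lambda_n^{(0)}$ for $|n|$ large. This simultaneously establishes both the asymptotic formula \eqref{l22.0} and the fact that, apart from the $N=N_0+N_1$ eigenvalues we are free to place in the exceptional set $\Lambda$, the remaining spectrum is enumerated by $n\in\dZ$ as claimed — the count $N$ arising as the discrepancy between the number of zeros inside a large contour and the number of periods it contains, controlled by the degree drop from the polynomial prefactor.

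**Main obstacle.** The delicate point is making the passage from the $O(1/\Im\lambda)$ estimates of Lemmas \ref{lem.1}--\ref{lem.2} into a uniform $O(1/\lambda)$ perturbation valid along the horizontal line $\Im\lambda=\frac{\Re\ln(C_1/C_2)}{a-b}$ where the eigenvalues actually accumulate. On that line $\Im\lambda$ stays bounded, so the half-plane estimates alone are not immediately uniform; one must verify that near the root line the two exponentials $e^{ia\lambda}$ and $e^{ib\lambda}$ have comparable modulus (neither dominates) and that the error terms remain genuinely $O(1/|\lambda|)=O(1/|n|)$ there, which requires a slightly more careful analysis of the transformation-operator remainder than the pure upper/lower half-plane bounds provide. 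Handling this boundary regime, together with the Rouché bookkeeping that pins down the precise count $N$, is the crux of the argument.
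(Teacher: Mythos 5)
Your proposal follows essentially the same route as the paper: write the characteristic function for \eqref{b1} explicitly, use Lemma \ref{lem.2} (this is where the differentiability of $Q$ enters) to reduce it asymptotically to the model function $C_1\exp(a\lambda i)-C_2\exp(b\lambda i)$ with a uniform $\tfrac1\lambda O(\max\{\exp(a\lambda i),\exp(b\lambda i)\})$ remainder, and then apply Rouch\'e twice --- coarsely in vertical strips to fix the enumeration, and finely on small disks about the unperturbed roots to get the $O(1/|n|)$ rate. The only notable differences are organizational: the paper removes the exceptional set $\Lambda$ by dividing $\chi$ by $\Pi(\lambda)=\prod_{\tilde\lambda_s\in\Lambda}(\lambda-\tilde\lambda_s)^{p_s}$, so that the entire function $\tilde\chi=\chi/\Pi$ has exactly the remaining eigenvalues as zeros and the degree-$N$ polynomial prefactor cancels automatically (instead of your large-contour counting of the discrepancy $N$), and the uniformity issue you single out as the main obstacle is precisely what the integration-by-parts estimates of Lemma \ref{lem.2} already settle, since their $O(1/\lambda)$ error bounds hold uniformly in the closed half-planes and hence on the horizontal line where the eigenvalues accumulate.
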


 \begin{proof}
 The characteristic function $\chi(\lambda)$ of the system \eqref{3}
 with the boundary conditions in \eqref{b1} has the form
 \begin{equation*}
 \chi(\lambda)=
 P_{11}(\lambda)(P_{21}(\lambda)\psi_{01}(1;\lambda)+P_{22}(\lambda)\psi_{02}(1;\lambda))-
 P_{12}(\lambda)(P_{21}(\lambda)\f_{01}(1;\lambda)+P_{22}(\lambda)\f_{02}(1;\lambda)),
 \end{equation*}
 where the products of polynomials are all of degree $N$ by assumptions.

 Introduce the function
 \begin{equation*}
 \Pi(\lam)=\prod_{\tilde\lam_s\in\Lambda}(\lam-\tilde\lam_s)^{p_s},
 \end{equation*}
 where $p_s$ is the multiplicity of the eigenvalue $\tilde\lam_s$ in
 the set $\Lambda$.

 Then the eigenvalues which do not belong to the set $\Lambda$,
 are the roots the entire function
 \begin{equation*}
 \tilde\chi(\lambda):=\frac{\chi(\lambda)}{\Pi(\lam)}.
 \end{equation*}
 It follows from Lemma \ref{lem.2} that the function $\tilde\chi(\lambda)$
 satisfies the following estimate:
 \begin{equation}\label{l22.1}
 \tilde\chi(\lambda)=C_1\exp(a\lambda i)-C_2\exp(b\lambda i)+
 \frac1\lambda O(\max\{\exp(a\lambda i),\exp(b\lambda i)\}),
 \end{equation}
 where $C_1C_2\ne 0.$

 On the line
 \begin{equation*}
 \Re\lambda=\frac{\Im\ln(C_1/C_2)+(2n+1)\pi}{b-a},
 \end{equation*}
 which is determined by the equation $\arg(C_1\exp(a\lambda i))=\arg(-C_2\exp(b\lambda i))$,
 one has
 \begin{equation}\label{l22.2}
 |C_1\exp(a\lambda i)-C_2\exp(b\lambda i)|=|C_1\exp(a\lambda i)|+|C_2\exp(b\lambda i)|.
 \end{equation}

 From \eqref{l22.1} and \eqref{l22.2} one concludes that on this
 line, with $|\lambda|$ large enough,
 \begin{equation}\label{l3.3}
 |\tilde\chi(\lambda)-(C_1\exp(a\lambda i)-C_2\exp(b\lambda i))|<|C_1\exp(a\lambda i)-C_2\exp(b\lambda i)|.
 \end{equation}

 Therefore, it follows from Rouche's theorem (see \cite{Rudin}) and the estimates \eqref{l22.1}
 and \eqref{l3.3} that for $|n |$ large enough there exists precisely one root of $\chi(\lambda)$ in the strip
 \begin{equation}\label{l3.4}
 (2n-1)\pi<(b-a)\Re\lambda+\Im\ln(C_1/C_2)<(2n+1)\pi,
 \end{equation}
 and, furthermore, that there are $2|n|-1$ roots of the function $\tilde\chi(\lambda)$ in the strip
 \begin{equation}\label{l3.5}
 -(2|n|-1)\pi<(b-a)\Re\lambda+\Im\ln(C_1/C_2)<(2|n|-1)\pi.
 \end{equation}

 Therefore the roots of $\tilde\chi(\lambda)$ except for, possibly, a finite
 number of them, are simple. Moreover, the roots $\lambda_n$ of
 $\tilde\chi (\lambda)$ can be ordered as a bilateral sequence,
 so that for  $|n|>n_0$, \ $\lambda_n$  belongs to the strip (\ref {l3.4}).

 Let
 \begin{equation}\label{l3.6}
 \lambda_{n,0}=\frac{i\ln (C_1/C_2)+2\pi n}{b-a}
 \end{equation}
 be the root of the function
 \begin{equation*}
 \tilde\chi_0(\lambda)=C_1\exp(a\lambda i)-C_2\exp(b\lambda i).
 \end{equation*}
 Consider a disk $D(n,\rho)$ with the radius $\rho$ and the center $\lambda_{n,0}$.
 For $\lambda\in D(n,\rho)$ it follows from \eqref{l22.1} that there exist $K_1$, such that
 \begin{equation}\label{l3.7}
 |\lambda||\tilde\chi(\lambda)-\tilde\chi_0(\lambda)|<K_2(|\exp(a\Im\lambda)|+|\exp(b\Im\lambda)|)<K_1.
 \end{equation}
 Moreover, because $\Im\lambda$ is independent of $n$, $K_1$ is independent of $n,$ too.

 On the other hand, because the derivative of the function $\tilde\chi_0(\lambda)$ at $\lambda_{n,0}$ is nonzero, then,
 for $\rho$ small enough there exists $K_3$, such that $|\tilde\chi_0(\lambda)|>K_3 |\lambda-\lambda_0|$.

 Therefore, if $\rho\ge\frac{K_1}{K_3|\lambda|}$ and $|\lambda-\lambda_0|=\rho$, then \eqref{l3.7} implies that
 \begin{equation}\label{l3.8}
 |\tilde\chi_0(\lambda)|>\frac{K_1}{|\lambda|}>K_2\frac{|\exp(a\Im\lambda)|+|\exp(b\Im\lambda)|}{|\lambda|}>
 |\tilde\chi(\lambda)-\tilde\chi_0(\lambda)|.
 \end{equation}

 Hence, again by Rouche's theorem, in this disk the functions $\tilde\chi(\lambda)$ and $\tilde\chi_0(\lambda)$
 have the same number of roots, i.e., precisely one root.

 Therefore, $|\lambda_n-\lambda_{n,0}|<\frac{K_1}{K_3|\lambda_n|}$. Since
 $\lambda_n=\frac{2\pi n}{b-a}+O(1)$, one has $\frac{K_1}{K_3|\lambda_n|}=O(\frac1{|n|})$.

 Now, using the formula \eqref{l3.6}, the statement in \eqref{l22.0} follows.
 \end{proof}


 Lemmas \ref{lem2.1} and \ref{lem2.2} are used to prove the following theorem.

 \begin{theorem} \label {th2.2}

 Let $P_{11}(\lam)$ and $P_{12}(\lam)$ be relatively prime polynomials
 with $\deg P_{11}=\deg P_{12} = N_0$ and let $P_{21}(\lam)$ and
 $P_{22}(\lam)$  be relatively prime polynomials with $\deg P_{21}=\deg P_{22}=N_1$.

 Let $\Phi$ be a set, which consists of $N=N_0+N_1$ eigenfunctions and associate
 functions of the problem \eqref{3}, \eqref{b1} and assume that the SEAF of this problem
 without the set $\Phi$ is complete in the space $L^2[0,1]\oplus L^2[0,1]$.

 Then the SEAF of problem \eqref{3}, \eqref{b1} without the set $\Phi$
 is a Riesz basis in the space $L^2[0,1]\oplus L^2[0,1]$.

 \end{theorem}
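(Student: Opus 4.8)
The plan is to use Lemma \ref{lem2.1} as the organizing principle: since we already know (by hypothesis) that the SEAF of the problem \eqref{3}, \eqref{b1} without the set $\Phi$ is complete in $L^2[0,1]\oplus L^2[0,1]$, it suffices to exhibit a reference Riesz basis $\{\vek\varphi_n\}$ and show that the normalized root functions $\{\vek\psi_n\}$ of our problem satisfy $\sum_n\|\vek\psi_n-\vek\varphi_n\|^2<\infty$. First I would apply Lemma \ref{lem2.2} to pin down the asymptotics of the eigenvalues: after discarding the finitely many eigenvalues collected in $\Phi$, the remaining spectrum is (up to finitely many exceptions) simple and satisfies
\begin{equation*}
\lambda_n=\frac{i\ln(C_1/C_2)+2\pi n}{b-a}+O\Bigl(\frac1{|n|}\Bigr),\quad n\in\dZ.
\end{equation*}
In particular $\Im\lambda_n=O(1)$, so the eigenvalues lie in a horizontal strip and are asymptotically equispaced with spacing $2\pi/(b-a)$.

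Next I would compute the eigenfunctions explicitly. For the separated conditions \eqref{b1} the eigenfunction attached to $\lambda_n$ is (a scalar multiple of) $\vek\omega(x;\lambda_n)=P_{12}(\lambda_n)\vek{\f_0}(x;\lambda_n)-P_{11}(\lambda_n)\vek{\psi_0}(x;\lambda_n)$, and using the sharpened estimates from Lemma \ref{lem.2} (valid because $Q$ is differentiable) one sees that in $\dC^+$ the dominant behaviour is governed by $\exp(a\lambda_n ix)$ while in $\dC^-$ it is governed by $\exp(b\lambda_n ix)$. After substituting the asymptotics of $\lambda_n$ and normalizing in $L^2$, each root function becomes, up to an $O(1/|n|)$ error in norm, a pure exponential $\binom{c_1\,e^{a\lambda_n ix}}{c_2\,e^{b\lambda_n ix}}$ with $\lambda_n$ replaced by its leading value $\lambda_{n,0}=(i\ln(C_1/C_2)+2\pi n)/(b-a)$. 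Since $a<0<b$, the two components $e^{ia\lambda_{n,0}x}$ and $e^{ib\lambda_{n,0}x}$ run through two families of exponentials whose frequencies $2\pi a n/(b-a)$ and $2\pi b n/(b-a)$ together, after accounting for the fixed phase shift and the constant imaginary part of $\lambda_{n,0}$, form a complete orthogonal-type system; the natural reference basis $\{\vek\varphi_n\}$ is precisely this system of exponential vectors, which is a Riesz basis of $L^2[0,1]\oplus L^2[0,1]$ because it is (a bounded perturbation of) the standard Fourier basis on $[0,1]$ in each component.

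The main obstacle will be the $\ell^2$-summability estimate $\sum_n\|\vek\psi_n-\vek\varphi_n\|^2<\infty$. This requires controlling two distinct sources of error: the $O(1/|n|)$ discrepancy between $\lambda_n$ and $\lambda_{n,0}$, and the $O(1/\lambda_n)$ transformation-operator corrections carried by $\vek\omega$ through Lemma \ref{lem.2}. For the first source one expands $e^{ia\lambda_n x}=e^{ia\lambda_{n,0}x}(1+O(|\lambda_n-\lambda_{n,0}|))=e^{ia\lambda_{n,0}x}(1+O(1/|n|))$ uniformly in $x\in[0,1]$, so the induced $L^2$ error is $O(1/|n|)$ per function; the second source contributes another $O(1/|n|)$. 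Squaring and summing over $n\in\dZ$ gives a convergent series $\sum_n O(1/n^2)<\infty$, as required. I would also need to verify that the normalization constants stay bounded away from $0$ and $\infty$ (again using $\Im\lambda_n=O(1)$ so that the $L^2$-norms of the exponential components are comparable to constants), and that the finitely many exceptional or non-simple eigenvalues, together with the discarded set $\Phi$, do not affect the Riesz basis property since altering finitely many vectors of a Riesz basis preserves the property. Having established both completeness (by hypothesis) and the quadratic closeness to the reference Riesz basis, Lemma \ref{lem2.1} yields the conclusion that the SEAF without $\Phi$ is itself a Riesz basis.
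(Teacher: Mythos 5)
Your overall architecture --- Lemma \ref{lem2.2} for the eigenvalue asymptotics, Lemma \ref{lem.2} for the eigenfunction asymptotics, an $\ell^2$-closeness estimate, then Lemma \ref{lem2.1} --- is exactly the paper's, and your handling of the two error sources and of the finitely many exceptional eigenvalues is fine. The genuine gap is at the one step that carries the whole proof: your claim that the reference system $\vek{\varphi_n}=\binom{c_1e^{ia\lambda_{n,0}x}}{c_2e^{ib\lambda_{n,0}x}}$, with $\lambda_{n,0}=\frac{i\ln(C_1/C_2)+2\pi n}{b-a}$, is a Riesz basis of $L^2[0,1]\oplus L^2[0,1]$ ``because it is (a bounded perturbation of) the standard Fourier basis on $[0,1]$ in each component.'' This is false. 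The first-component frequencies are $2\pi an/(b-a)$ and the second-component frequencies are $2\pi bn/(b-a)$; since $a<0<b$ gives $|a|<b-a$ and $b<b-a$, both families have spacing strictly smaller than $2\pi$, i.e.\ each is the restriction to $[0,1]$ of the Fourier basis of a strictly longer interval. Such a family is complete but \emph{not minimal} in $L^2[0,1]$ (for $L>1$ the system $\{e^{2\pi inx/L}\}_{n\ne 0}$ is still complete on $[0,1]$: extend an orthogonal $f$ by zero to $[0,L]$; its Fourier expansion reduces to a constant that vanishes on $(1,L)$, so $f=0$), hence it is not a Riesz basis of the component space. Nor can any perturbation theorem of Kadec type rescue this: the two frequency sequences have different densities, so they cannot be paired with uniformly small differences. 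The vector system $\{\vek{\varphi_n}\}$ \emph{is} a Riesz basis of the direct sum, but only because the two overcomplete component families are coupled through the same index $n$ --- and that is precisely what still has to be proven; your phrase ``form a complete orthogonal-type system'' asserts rather than proves it.

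The paper closes exactly this hole with a splicing operator: $A:L_2[0,1]\oplus L_2[0,1]\to L_2[a,b]$ defined in \eqref{t22.4} by $A\binom{y_1}{y_2}(x)=\frac1{C_{12}}y_1(x/a)$ for $a<x<0$ and $-\frac1{C_{11}}y_2(x/b)$ for $0<x<b$. This $A$ is bounded with bounded inverse, and it rescales and concatenates the two exponential components into the single exponential $e^{i\lambda_{n,0}x}$ on the long interval $[a,b]$, where the frequencies $2\pi n/(b-a)$ have exactly the critical spacing for an interval of length $b-a$; thus $\{A\vek{\varphi_n}\}$ is, up to the bounded invertible multiplication by $e^{-\ln(C_1/C_2)x/(b-a)}$, the Fourier basis of $L_2[a,b]$. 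Your $O(1/|n|)$ estimates then give $\sum_n\|A\vek{\omega_n}-A\vek{\varphi_n}\|^2<\infty$, Lemma \ref{lem2.1} is applied in $L_2[a,b]$, and the Riesz property is pulled back by $A^{-1}$. Without this device (or an equivalent argument exploiting the joint structure of the two components), your proof does not go through; with it, the rest of your outline coincides with the paper's proof.
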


 \begin{proof}
 By Lemma \ref{lem2.2}, it is possible to enumerate the eigenvalues $\lambda_n$,
 corresponding to the eigenfunctions $\vek{\omega_n}(x)$ which are not contained in set $\Phi$,
 such that
 \begin{equation*}
 \lambda_n=\frac{i\ln (C_1/C_2)+2\pi n}{b-a}+O\left(\frac 1{|n|}\right).
 \end{equation*}

 Because $\vek{\omega_n}(x)$ satisfies the first of the conditions in \eqref{b1}, it may be written in the form
 $\vek{\omega_n}(x)=P_{12}(\lambda_n)\vek{\f_0}(x;\lambda_n)-
 P_{11}(\lambda_n)\vek{\psi_0}(x;\lambda_n)$ (up to a constant multiplier).

 Then, by Lemma \ref{lem.2},
 \begin{equation} \label {t22.2}
 \vek{\omega_n}(x)=
 \binom{C_{12}\exp(a\lambda_n ix)}{-C_{11}\exp(b\lambda_n ix)}+
 \frac 1{\lambda_n} (O(\exp(a\lambda_n ix))+O(\exp(b\lambda_n ix))).
 \end{equation}

 Combining the estimates \eqref{l22.0} in Lemma \ref{lem2.2} with \eqref{t22.2} one
 obtains
 \begin{multline} \label {t22.3}
 \vek{\omega_n}(x)=
 \binom{C_{12}\exp(a\frac{i\ln (C_1/C_2)+2\pi n}{b-a} ix)}
 {-C_{11}\exp(b\frac{i\ln (C_1/C_2)+2\pi n}{b-a} ix)}+
 \frac 1n (O(\exp(a\frac{i\ln (C_1/C_2)+2\pi n}{b-a} ix)) \\
     + O(\exp(b\frac{i\ln (C_1/C_2)+2\pi n}{b-a} ix)))=
 \binom{C_{12}\exp(a\frac{i\ln (C_1/C_2)+2\pi n}{b-a} ix)}
 {-C_{11}\exp(b\frac{i\ln (C_1/C_2)+2\pi n}{b-a} ix)}+O(\frac 1n).
 \end{multline}

 Now define the operator $A:L_2[0,1]\oplus L_2[0,1]\to L_2[a,b]$ via
 \begin{equation} \label {t22.4}
 A\binom{y_1}{y_2}(x)=
 \begin{cases}
 \frac 1{C_{12}}y_1(\frac xa), \quad
 &\text{ where } a<x<0 \\
 -\frac 1{C_{11}}y_2(\frac xb), \quad &\text{ where } 0<x<b .
 \end{cases}
 \end{equation}
 Then $A$ and $A^{-1}$ are bounded. Therefore, the system $\vek{\omega_n}(x)$
 is a Riesz basis in the space $L_2[0,1]\oplus L_2[0,1]$ if and only if the system $A(\vek{\omega_n})$
 is a Riesz basis in the space $L_2[a,b]$.

 From the estimate \eqref{t22.3} and the definition of $A$ in \eqref{t22.4} one
 obtains
 \begin{equation} \label {t22.5}
 A(\vek{\omega_n})=\exp(\frac{i\ln (C_1/C_2)+2\pi n}{b-a} ix)+O(\frac 1n).
 \end{equation}
 It is obvious, that the system
 \begin{equation*}
 \tilde \omega_n=\exp(\frac{i\ln (C_1/C_2)+2\pi n}{b-a} ix)
 \end{equation*}
 is an orthogonal basis in the space $L_2[a,b]$ and that the norms of $\tilde \omega_n$ are given by
 \begin{equation*}
 \|\tilde \omega_n\|=\int_a^b\exp(-2\frac{\Re(\ln (C_1/C_2))}{b-a} x)\, dx
 \end{equation*}
 for all $n$.
 From the estimate \eqref{t22.5} one concludes that
 \begin{equation*}
 \sum_{n=-\infty}^{\infty}\|A(\vek{\omega_n})-\tilde \omega_n\|^2<\infty.
 \end{equation*}
 Therefore, by Lemma \ref{lem2.1}, $A(\vek{\omega_n})$ is a Riesz
 basis in $L_2[a,b]$.
 \end{proof}

 From Theorem \ref{th2.2} we obtained the following result.

 \begin{theorem} \label {th2.3}

 Let $\deg P_{11}=\deg P_{12}=0$, i.e., $P_{11}\ne 0$ and $P_{12}\ne 0$ are constants.
 Let $P_{21}(\lam)$ and $P_{22}(\lam)$  be relatively prime
 polynomials with $\deg P_{21}=\deg P_{22}=N$.

 Let $\Phi$ be a set, which consists of $N$ eigenfunctions and associate functions,
 which satisfies the following condition:

 If $\Phi$ contains either an eigenfunction or an associate function
 corresponding to an eigenvalue $\lam_k$, then it also contains
 all the associate functions of higher order corresponding to
 the same eigenvalue.

 Then the SEAF of the problem \eqref{3}, \eqref{b1} without the set $\Phi$
 is a Riesz basis in the space $L^2[0,1]\oplus L^2[0,1]$.
 \end{theorem}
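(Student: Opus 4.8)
The plan is to deduce the statement directly from Theorem~\ref{th2.2} by checking its hypotheses, the only delicate one being the completeness of the SEAF after deletion of $\Phi$. Since $P_{11},P_{12}$ are nonzero constants they are automatically relatively prime, with $N_0:=\deg P_{11}=\deg P_{12}=0$, while $P_{21},P_{22}$ are relatively prime with $N_1:=\deg P_{21}=\deg P_{22}=N$; thus $N_0+N_1=N=|\Phi|$, exactly as Theorem~\ref{th2.2} demands. It therefore remains to verify that the SEAF of \eqref{3}, \eqref{b1} with the set $\Phi$ removed is complete; Theorem~\ref{th2.2} then yields the Riesz basis property.

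For completeness I would rerun the Phragm\'en--Lindel\"of scheme of Theorem~\ref{theo.1}, exploiting $\deg P_{11}=\deg P_{12}=0$. Suppose $\vec f$ is orthogonal to the SEAF minus $\Phi$, and form $\widetilde F_j$, $\Pi$, and $F_j=\Pi\widetilde F_j/\chi$ as in \eqref{35}--\eqref{37}, with $\deg\Pi=N$. For the separated conditions the two functions of Theorem~\ref{theo.1} read $\vek\omega_1=P_{12}\vek{\f_0}-P_{11}\vek{\psi_0}$, with \emph{constant} coefficients, and a second $\vek\omega_2$ whose coefficients are the degree-$N$ combinations of $P_{21},P_{22}$ with the entries of $\vek{\f_0}(1;\lambda),\vek{\psi_0}(1;\lambda)$. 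The crucial point is that $\vek\omega_1$ carries no polynomial-in-$\lambda$ factor, so that by Lemma~\ref{lem.1} and the Cauchy--Schwarz bounds of Theorem~\ref{theo.1} one gets $\widetilde F_1=O(e^{a\lambda i}/\sqrt{|\Im\lambda|})$ in $\dC^+$ and $\widetilde F_1=O(e^{b\lambda i}/\sqrt{|\Im\lambda|})$ in $\dC^-$; dividing by the estimates \eqref{42}, \eqref{43} for $\chi$, in which $J_{32}$ and $J_{14}$ have degree $N$, the factor $\Pi$ is absorbed and $F_1=O(1/\sqrt{|\Im\lambda|})$. Phragm\'en--Lindel\"of then forces $F_1\equiv0$, hence $\widetilde F_1\equiv0$, i.e.\ $\langle\vek{\f_0}(\cdot;\lambda),\vec f\rangle=\tfrac{P_{11}}{P_{12}}\langle\vek{\psi_0}(\cdot;\lambda),\vec f\rangle$ for every $\lambda$.

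To finish I would feed this relation into the companion function. Writing $u(\lambda)=\langle\vek{\f_0},\vec f\rangle$ and $v(\lambda)=\langle\vek{\psi_0},\vec f\rangle$, the identity $\chi=P_{11}\bigl(P_{21}\psi_{01}(1;\lambda)+P_{22}\psi_{02}(1;\lambda)\bigr)-P_{12}\bigl(P_{21}\f_{01}(1;\lambda)+P_{22}\f_{02}(1;\lambda)\bigr)$ from Lemma~\ref{lem2.2} gives $\widetilde F_2=\chi v/P_{12}$, hence $F_2=\Pi v/P_{12}$. The same degree count shows $F_2=O(\lambda^{N}/\sqrt{|\Im\lambda|})$ in both half-planes, so after a Phragm\'en--Lindel\"of step in the intermediate strip one has $F_2=O(|\lambda|^{N})$ on all of $\dC$; since $\deg\Pi=N$, the entire function $v=P_{12}F_2/\Pi$ is then bounded, hence constant by Liouville's theorem. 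Finally, for real $\lambda$ the transformation-operator representation (cf.\ \eqref{l1.1}) exhibits the components of $\vek{\psi_0}(\cdot;\lambda)$ as modulus-one exponentials times bounded factors, so $v(\lambda)\to0$ as $\lambda\to\pm\infty$ along $\dR$ by the Riemann--Lebesgue lemma; thus the constant is $0$ and $v\equiv0$, whence also $u\equiv0$. Therefore $\vec f$ is orthogonal to $\vek{\f_0}(\cdot;\lambda)$ and $\vek{\psi_0}(\cdot;\lambda)$ for all $\lambda$, i.e.\ to every solution of \eqref{3}; since the full SEAF is complete by Theorem~\ref{theo.1}, $\vec f\equiv0$, so the SEAF minus $\Phi$ is complete and Theorem~\ref{th2.2} applies.

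The main obstacle is precisely this completeness step: a naive application of Theorem~\ref{theo.1} licenses the removal of only $\deg J_{14}-M=N-N=0$ functions, so deleting the $N$-element set $\Phi$ is \emph{not} covered by that theorem as stated. What rescues the argument is the degree-$0$ condition at the left endpoint, which strips the polynomial factor from $\widetilde F_1$ and keeps $F_1$ bounded even after multiplication by the degree-$N$ polynomial $\Pi$; the delicate part is the bookkeeping of matching powers of $\lambda$ against the exponentials $e^{a\lambda i},e^{b\lambda i}$ in both $\dC^+$ and $\dC^-$, so that $F_1$ stays bounded while $F_2$ grows only polynomially and Liouville plus Riemann--Lebesgue can close the loop.
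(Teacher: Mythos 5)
Your proposal is correct, and in fact it does more than the paper's own proof, which consists of two sentences: completeness of the SEAF minus $\Phi$ is claimed to follow from Theorem \ref{theo.1}, and then Theorem \ref{th2.2} gives the Riesz basis property. You follow the same skeleton (reduce everything to the completeness hypothesis of Theorem \ref{th2.2}), but you correctly observe that the first step fails as literally stated: writing \eqref{b1} in the form \eqref{5} gives the coefficient rows $(P_{11},P_{12},0,0)$ and $(0,0,P_{21},P_{22})$, so $\deg J_{14}=\deg J_{32}=N$ while also $M=N$, and Theorem \ref{theo.1} licenses the removal of only $\deg J_{14}-M=0$ functions, not the $N$ functions removed in Theorem \ref{th2.3}. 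So the paper's citation is an application of Theorem \ref{theo.1} outside its stated hypotheses --- a genuine gap (or an unstated strengthening of that theorem) --- and your supplementary argument is exactly what is needed to close it. The point you exploit is that the first equality in \eqref{50} is row-wise: since the first boundary condition has constant coefficients, $\widetilde F_1=O(e^{a\lam i}/\sqrt{|\Im\lam|})$ in $\dC^+$ (similarly with $e^{b\lam i}$ in $\dC^-$), so $F_1=\Pi\widetilde F_1/\chi$ remains bounded after multiplication by the degree-$N$ factor $\Pi$ and Phragm\'en--Lindel\"of gives $\widetilde F_1\equiv0$; for the second row this bookkeeping only yields $F_2=O(\lam^N/\sqrt{|\Im\lam|})$, and your extra step --- the identity $\widetilde F_2=\chi v/P_{12}$ obtained by substituting $P_{12}u=P_{11}v$ into $\widetilde F_2=Q_{22}u-Q_{21}v$ with $\chi=P_{11}Q_{22}-P_{12}Q_{21}$, hence $F_2=\Pi v/P_{12}$, followed by Phragm\'en--Lindel\"of, Liouville, and Riemann--Lebesgue to force $v\equiv u\equiv 0$ --- has no counterpart in the paper. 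What the paper's route buys is brevity; what yours buys is an argument that actually proves the completeness step, at the same level of rigor as the paper's proof of Theorem \ref{theo.1} (the finite-order justification for Phragm\'en--Lindel\"of in the strip, and the passage from $u\equiv v\equiv0$ to orthogonality to all eigen- and associate functions via $\lam$-derivatives, are implicit in both). Two small points to make explicit if you write this up: $F_2$ being entire with $F_2=O(|\lam|^N)$ forces $F_2$ to be a polynomial of degree at most $N$, so $v=P_{12}F_2/\Pi$ is a rational entire function, i.e.\ a polynomial of degree at most $0$; and the Riemann--Lebesgue step should be run through the transformation-operator representation \eqref{l1.1} of $\vek{\psi_0}$, exactly as you indicate.
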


 \begin{proof}
 In this case, by Theorem \ref{theo.1}, the SEAF of the problem \eqref{3}, \eqref{b1}
 without the set $\Phi$ is complete in the space $L^2[0,1]\oplus L^2[0,1]$.

 Therefore, by Theorem \ref{th2.2} it is a Riesz basis.
 \end{proof}

By taking $N=0$ in Theorem \ref{th2.3} yields the following
corollary for the system \eqref{3} with boundary conditions not
depending on a spectral parameter, see \cite{TroYa}.

 \begin{corollary} \label{c.2}
 (\cite{TroYa}) Let $h_1$ and $h_2$ be nonzero numbers.
 Then the SEAF of problem \eqref{3} with boundary conditions
 \begin{equation}\label{c2.1}
 \begin{cases}
 y_1(0)+h_1y_2(0)=0 \\
 y_1(1)+h_2y_2(1)=0,
 \end{cases}
 \end{equation}
 is a Riesz basis in the space $L^2[0,1]\oplus L^2[0,1]$.
 \end{corollary}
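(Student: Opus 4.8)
The plan is to derive the corollary as the special case $N=0$ of Theorem~\ref{th2.3}. First I would identify the boundary conditions \eqref{c2.1} with the general separated conditions \eqref{b1} by choosing the constant polynomials $P_{11}\equiv 1$, $P_{12}\equiv h_1$, $P_{21}\equiv 1$, and $P_{22}\equiv h_2$. Since $h_1$ and $h_2$ are nonzero, all four polynomials are nonzero constants, so $\deg P_{11}=\deg P_{12}=0$ and $\deg P_{21}=\deg P_{22}=0$; in the notation of Theorem~\ref{th2.3} this means the first two polynomials have degree $0$ as required and the parameter $N$ equals $0$.

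Next I would check the remaining hypotheses of Theorem~\ref{th2.3}. Two nonzero constants are relatively prime as polynomials, since neither possesses a nonconstant divisor; hence the pair $P_{11},P_{12}$ is relatively prime and so is the pair $P_{21},P_{22}$. Because $N=0$, the exceptional set $\Phi$ in Theorem~\ref{th2.3} consists of $N=0$ eigenfunctions and associate functions, i.e.\ $\Phi=\emptyset$, so the imposed stability condition on $\Phi$ holds vacuously. Invoking Theorem~\ref{th2.3} then shows that the SEAF of \eqref{3}, \eqref{c2.1} with the empty set $\Phi$ removed---that is, the entire SEAF---forms a Riesz basis in $L^2[0,1]\oplus L^2[0,1]$, which is the assertion.

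I do not anticipate any real difficulty, as the proof is a direct specialization; the only care needed lies in the bookkeeping remarks that nonzero constants are coprime and of degree zero and that $N=0$ forces $\Phi$ to be empty. Note that Theorem~\ref{th2.3} already packages completeness (via Theorem~\ref{theo.1}) together with the Riesz basis property (via Theorem~\ref{th2.2}), so at this level no separate verification of the rank or degree conditions such as \eqref{23} is required.
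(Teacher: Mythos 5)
Your proof is correct and follows exactly the paper's route: the paper obtains the corollary precisely by setting $N=0$ in Theorem~\ref{th2.3}, and your identification $P_{11}\equiv 1$, $P_{12}\equiv h_1$, $P_{21}\equiv 1$, $P_{22}\equiv h_2$ (nonzero constants, hence coprime of degree $0$, forcing $\Phi=\emptyset$) fills in the routine bookkeeping that the paper leaves implicit.
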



 \end{document}